\documentclass[reqno]{amsart}

\usepackage{amsfonts}
\usepackage{amsthm}
\usepackage{amstext}
\usepackage{amsmath}
\usepackage{amscd}
\usepackage{amssymb}
\usepackage[mathscr]{eucal}
\usepackage{epsf}
\usepackage{array}
\usepackage{url}
\usepackage{enumerate}
\usepackage{enumitem}
\usepackage{graphicx}
\usepackage{subcaption} 
\usepackage{accents}
\RequirePackage[numbers]{natbib}

\newcommand{\deter}{\textup{DET}}
\newcommand{\rr}{\textup{RR}}
\newcommand{\corsum}{\textup{C}}
\newcommand{\corint}{\textup{c}}

\hyphenation{
}

 \makeatletter
 \def\@seccntformat#1{\csname the#1\endcsname.\quad}
 \makeatother

\theoremstyle{plain}
\newtheorem{theorem}{Theorem}[section]
\newtheorem{proposition}[theorem]{Proposition}
\newtheorem{corollary}[theorem]{Corollary}
\newtheorem{lemma}[theorem]{Lemma}

\theoremstyle{definition}

\theoremstyle{remark}
\newtheorem{remark}[theorem]{Remark}

\theoremstyle{remark}

\numberwithin{equation}{section}

\newcommand{\NNN}{\mathbb{N}}

\newcommand{\length}[1]{\lvert #1 \rvert}
\newcommand{\uu}{\omega}  %% notation for the period doubling sequence
\newcommand{\AAa}{A}      %% notation for the alphabet
\newcommand{\eps}{\varepsilon}

%%%%%%%%%%%%%%%%%%%%%%%%%%%%%%%%%%%%%%%%%%%%%%%%%%%%%%%%%%%%%%%%%%%%%%%%%%%%%%%
%%%%%%%%%%%%%%%%%%%%%%%%%%%%%%%%%%%%%%%%%%%%%%%%%%%%%%%%%%%%%%%%%%%%%%%%%%%%%%%
%%%%%%%%%%%%%%%%%%%%%%%%%%%%%%%%%%%%%%%%%%%%%%%%%%%%%%%%%%%%%%%%%%%%%%%%%%%%%%%
%%%%%%%%%%%%%%%%%%%%%%%%%%%%%%%%%%%%%%%%%%%%%%%%%%%%%%%%%%%%%%%%%%%%%%%%%%%%%%%
%%%%%%%%%%%%%%%%%%%%%%%%%%%%%%%%%%%%%%%%%%%%%%%%%%%%%%%%%%%%%%%%%%%%%%%%%%%%%%%
\begin{document}
\bibliographystyle{abbrv}

\title[Complexity and invariant measure of the period-doubling subshift]
 {Complexity and invariant measure \\of the period-doubling subshift
 }

% Information for the first author
\author[M. Pol\'akov\'a]{Miroslava Pol\'akov\'a}
\address{
  Department of~Mathematics, Faculty of~Natural Sciences, Matej Bel University, Tajovsk\'eho~40,
  Bansk\'a Bystrica, Slovakia
}
\email{miroslava.sartorisova@umb.sk}

%General info
\subjclass[2010]{Primary 37B10; Secondary 37A35, 68R15}
%37B10   	Dynamical systems and ergodic theory
%           / Topological dynamics
%           / Symbolic dynamics
%
%37A35      Dynamical systems and ergodic theory
%           / Ergodic theory 
%           / Entropy and other invariants, isomorphism, classification
%68R15      Computer science
%   		/ Discrete mathematics in relation to computer science
%           / Combinatorics on words

\keywords{period-doubling sequence, invariant measure, correlation integral, determinism, recurrence quantification analysis}

\begin{abstract}
Explicit formulas for complexity and unique invariant measure
of the period-doubling subshift can be derived from those for the Thue-Morse subshift,
obtained by Brlek, De Luca and Varricchio, and Dekking.
In this note we give direct proofs
based on combinatorial properties of the
period-doubling sequence.
We also derive explicit formulas for correlation integral
and other recurrence characteristics of the period-doubling subshift.
As a corollary we obtain that the determinism of this subshift
converges to $1$ as the distance threshold approaches $0$.

\end{abstract}

\maketitle

\thispagestyle{empty}

%%%%%%%%%%%%%%%%%%%%%%%%%%%%%%%%%%%%%%%%%%%%%%%%%%%%%%%%%%%%%%%%%%%%%%%%%%%%%%%
%%%%%%%%%%%%%%%%%%%%%%%%%%%%%%%%%%%%%%%%%%%%%%%%%%%%%%%%%%%%%%%%%%%%%%%%%%%%%%%
%%%%%%%%%%%%%%%%%%%%%%%%%%%%%%%%%%%%%%%%%%%%%%%%%%%%%%%%%%%%%%%%%%%%%%%%%%%%%%%
%%%%%%%%%%%%%%%%%%%%%%%%%%%%%%%%%%%%%%%%%%%%%%%%%%%%%%%%%%%%%%%%%%%%%%%%%%%%%%%
%%%%%%%%%%%%%%%%%%%%%%%%%%%%%%%%%%%%%%%%%%%%%%%%%%%%%%%%%%%%%%%%%%%%%%%%%%%%%%%
\section{Introduction}\label{S:intro}

The period-doubling sequence $$\omega = \omega_1 \omega_2 \ldots = 0100 \, 0101 \, 0100 \, 0100 \ldots$$
can be defined in various ways.
First, its $n$-th member is $0$ if and only if the largest $k$ such that
$k$-th power of $2$ divides $n$, is odd; otherwise it is $1$.
Second, $\omega$ is a unique fixed point of the primitive
substitution $0 \mapsto 01$, $1 \mapsto 00$.
Third, $\omega$ is the Toeplitz sequence defined by patterns $(0*)$ and $(1*)$;
for the general definition of Toeplitz sequences
see \cite{jacobs19690, downarowicz2005survey}.

The induced subshift, again called period-doubling, is strictly ergodic
(i.e. it is minimal and has a unique invariant measure) and
has zero topological entropy.
Dynamical properties of this subshift were studied already in 50s and 60s, see
the book \cite{gottschalk1955topological} by Gottschallk and Hedlund
and the article \cite{jacobs19690} by Jacobs and Keane;
for some recent references see e.~g.~\cite{damanik2000local, avgustinovich2006sequences, coven2008characterization}.
In the book \cite{kurka2003topological},
period-doubling subshift (called Feigenbaum subshift therein)
is mentioned many times as an example with interesting dynamics.

The period doubling sequence is tightly connected with the Thue-Morse sequence,
which is a unique fixed point of the primitive substitution
$0 \mapsto 01, 1 \mapsto 10$ which starts with $0$.
Complexity of this sequence was studied in
\cite{brlek1989enumeration, de1989some} and
the invariant measure was considered in \cite{dekking1992thue}.

The period-doubling sequence $\omega$
is a 2-to-1 image of the Thue-Morse sequence \cite[Definition 12.51]{gottschalk1955topological};
every subword $w=w_1 \ldots w_n$ of $\omega$ corresponds to exactly two subwords $u = u_1 \ldots u_{n+1}$
of the Thue-Morse sequence such that $u_i = u_{i+1}$ if and only if $w_i = 1$.
This relation and the results from \cite{brlek1989enumeration, de1989some, dekking1992thue}  yield
formula \eqref{EQ:complexity}  for the complexity of $\omega$,
and a description of the unique invariant period-doubling measure $\mu$;
namely for every allowed $m$-word $u$ $(m \geq 1)$ we have
$$ \mu ([u]) = \frac{2}{3\cdot 2^k} \qquad \text{or} \qquad \mu ([u]) = \frac{1}{3\cdot 2^k}, $$
where $k \geq 0$ is such that $2^k \leq m < 2^{k+1}$.

These results are well-known, but cannot be easily found in the literature.
Since the period-doubling substitution is of constant length, it is possible to study the complexity of it using a general method from \cite{Mosse96}; however, it yields a set of non-trivial recurrent formulas and it seems difficult to derive \eqref{EQ:complexity} from them.

Dekking \cite{dekking1992thue} has described factor frequencies
in the Thue-Morse sequence and the Fibonacci sequence.
Factor frequencies in generalized Thue-Morse words were studied
in \cite{balkova2011factor}.
Frid \cite{frid1998frequency} has obtained
a precise description of factor frequencies
in a wide class of fixed points of substitutions
(the so-called circular marked uniform substitutions, for definitions see \cite{frid1998frequency})
including the Thue-Morse sequence,
but the period-doubling sequence, being not marked, does not belong to this class.

Here we give a direct proof of formula \eqref{EQ:complexity}
based on the combinatorics of the period-doubling sequence $\omega$,
and we derive some other properties of $\omega$.
One of them states that if the length $m$ is a power of $2$,
then the set of all $m$-words is equal to the set of first $(3/2)m$ subwords of $\omega$.

\begin{theorem}[Complexity of the period-doubling sequence]\label{T:complexity}
	Let $m\in\NNN$ be arbitrary. Then the number of $m$-words in the period-doubling
	sequence is given by
	\begin{equation}
	\label{EQ:complexity}
	p(m) = 
	\begin{cases}
	2                      &\text{if } k=0;
	\\
	3\cdot 2^{k-1}+2q      &\text{if } k\ge 1 \text{ and }\ q\le 2^{k-1};
	\\
	4\cdot 2^{k-1}+q       &\text{if } k\ge 1 \text{ and }\ q > 2^{k-1};
	\end{cases}
	\end{equation}
	where $k\ge 0$ and $0\le q<2^k$ are such that $m=2^k+q$. 
	
	Furthermore, for $m=2^k \geq 2$, the set of all $m$-words is
	$$\mathcal{L}_m (\uu) = \left\{w_{i}^{(m)} \colon 1 \leq i \leq \frac{3}{2}m \right\},$$
	where $w_i^{(m)} = \omega_i \ldots \omega_{i+m-1}$.
	
\end{theorem}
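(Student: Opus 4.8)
The plan is to prove the complexity formula via the combinatorial structure of the period-doubling sequence $\omega$, specifically its self-similarity under the substitution $\varphi\colon 0 \mapsto 01$, $1 \mapsto 00$. Since $\varphi$ has constant length $2$, every position $n$ in $\omega$ factors as $n = 2j$ or $n = 2j+1$, and each length-$2$ block $\omega_{2j-1}\omega_{2j}$ is the image $\varphi(\omega_j)$ of a single letter. The key observation I would establish first is a \emph{desubstitution} lemma: a long word $w$ of $\omega$ determines, up to a boundary ambiguity of one letter on each end, a shorter preimage word in $\omega$, and the number of length-$m$ factors is governed by how factors of a given length lift and split under $\varphi$. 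I expect the cleanest route is to prove the second (structural) statement first, for $m = 2^k$, and then derive the counting formula from it by a doubling recursion.

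First I would prove the structural claim that for $m = 2^k \geq 2$ every $m$-word occurs as one of the first $\tfrac{3}{2}m$ factors $w_i^{(m)} = \omega_i \cdots \omega_{i+m-1}$, and that these $\tfrac{3}{2}m$ factors are pairwise distinct. The natural approach is induction on $k$. For the base case $m = 2$ one checks directly that the admissible $2$-words are exactly $\{01, 10, 00\}$ (the word $11$ never appears, since $1 \mapsto 00$ forbids two consecutive $1$'s and the images of $0$ and $1$ begin with $0$ and $0$ respectively), giving $p(2) = 3 = \tfrac{3}{2}\cdot 2$, and these are realized among $w_1^{(2)},\dots,w_3^{(2)}$. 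For the inductive step I would show that an $m$-word with $m = 2^{k+1}$ is determined by an $(m/2)$-word of $\omega$ together with a parity (phase) bit recording whether it starts at an even or odd position, using that $\varphi$ is injective on letters and that two consecutive letters of $\omega$ can be recovered from the corresponding length-$2$ image block. The content is that the number of factors exactly doubles minus a correction, which is precisely the relation $p(2^{k+1}) = \tfrac{3}{2} \cdot 2^{k+1}$.

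To pass from powers of two to general $m = 2^k + q$ with $0 \le q < 2^k$, I would analyze how $p$ grows as the window length increases by one. Each new letter appended to a factor can either preserve the number of distinct factors or split one factor class into two; the quantity $p(m+1) - p(m)$ counts the right-special factors of length $m$, i.e.\ factors $v$ such that both $v0$ and $v1$ occur. The crucial combinatorial input is to count these right-special factors as a function of $m$, and here the substitution structure again controls the answer: right-special factors of $\omega$ lift under $\varphi$ in a controlled way, so the number of length-$m$ right-special words follows a transparent pattern (constant on each dyadic block $[2^k, 2^{k+1})$, then doubling). Summing these first differences from the known anchor value $p(2^k) = 3\cdot 2^{k-1}$ yields the two-case formula: while $q \le 2^{k-1}$ the increment is $2$ per step, and once $q > 2^{k-1}$ the increment drops to $1$, matching the displayed cases.

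The main obstacle I anticipate is the bookkeeping around the \emph{boundary ambiguity} in the desubstitution step. Because $\varphi$ has length $2$, a factor of $\omega$ of odd length, or one that is misaligned with the block structure, does not come from a single clean preimage: its first or last letter may belong to a split image block, and this creates the parity/phase complication that distinguishes the two cases $q \le 2^{k-1}$ and $q > 2^{k-1}$. Making this precise — tracking which factors have a unique preimage and which acquire an extra companion because of a free boundary letter, and verifying that no collisions occur among the first $\tfrac{3}{2}m$ windows — is where the argument needs care. I would handle it by fixing an explicit alignment convention and proving a \emph{recognizability}-type statement for $\omega$ (the period-doubling substitution is recognizable), so that the factorization of any sufficiently long factor into $\varphi$-images is unique; the synchronization words making this work are short and can be exhibited explicitly from the Toeplitz pattern $(0*),(1*)$.
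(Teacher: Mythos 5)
Your overall architecture (settle $m=2^k$ first, then interpolate across a dyadic block) parallels the paper's, but there is a genuine gap at the step that carries the entire weight of the theorem: the count of right-special factors. You correctly note that, over a binary alphabet, $p(m+1)-p(m)$ equals the number of right-special $m$-words, and your target formula forces this number to be $2$ for $2^k\le m<3\cdot 2^{k-1}$ and to drop to $1$ for $3\cdot 2^{k-1}\le m<2^{k+1}$. But you give no argument for why the drop happens, nor where; you only assert that ``the substitution structure again controls the answer'' and that the count is ``constant on each dyadic block $[2^k,2^{k+1})$, then doubling.'' That assertion is in fact inconsistent with the formula you are trying to prove: the increment changes from $2$ to $1$ \emph{inside} the dyadic block, at $m=3\cdot2^{k-1}$, and a quantity taking only the values $1$ and $2$ never ``doubles.'' Locating this transition point is precisely the hard combinatorial content of the theorem. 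The paper does it by an explicit collision analysis: every $m$-word with $2^k\le m<2^{k+1}$ is a prefix of one of the $3\cdot 2^k$ pairwise distinct windows $w_i^{(2^{k+1})}$, and Lemma~\ref{L:mwords-general} determines exactly which pairs $i<j\le 3\cdot 2^k$ satisfy $w_i^{(m)}=w_j^{(m)}$ (two explicit families of coincidences, $j=i+2^{k+1}$ for $i\le 2^k-q$, and, only when $q<2^{k-1}$, $j=i+2^{k-1}$); subtracting the number of coincidences from $3\cdot 2^k$ yields the two cases of \eqref{EQ:complexity}. To make your route a proof you would need an analogous piece of real work, e.g.\ the bispecial-factor machinery: identify all bispecial words of $\omega$, show how they lift under the substitution, and compute their bilateral orders. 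None of that is sketched.

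A secondary gap sits in your power-of-two step. Your desubstitution argument gives $p(2m)\le 2p(m)$, and equality requires that no $2m$-word occurs both as an image word (odd positions) and as a shifted/boundary word (even positions); this is the recognizability-type statement you defer. It is not free: for $m=1$ it fails, since $00$ occurs at both parities and indeed $p(2)=3\neq 2p(1)$, so one must genuinely prove the absence of cross-parity coincidences for $m\ge 2$. The paper avoids this issue altogether: it writes $\omega$ as a concatenation of blocks $0^{(m)},1^{(m)}$ (Lemma~\ref{L:seq-in-mAlphabet}), uses that $0^{(m)}$ and $1^{(m)}$ differ only in their last letter (Lemma~\ref{L:0m1m-differs-at-final-letter}) to place every allowed $m$-word inside $0^{(m)}1^{(m)}$ or $1^{(m)}0^{(m)}$ with controlled offset (Lemma~\ref{L:mwords-in-0m1m}), and pairs this upper bound with the distinctness of the windows $w_i^{(m)}$, $1\le i\le\tfrac32 m$ (Lemma~\ref{L:complexity-lower-bound-2k}); the same two lemmas also deliver the ``furthermore'' description of $\mathcal{L}_m(\uu)$, which your outline only gestures at. So even the part of your plan that is closest in spirit to the paper substitutes a nontrivial unproved input (recognizability) for an elementary argument the paper carries out in full.
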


Further, we can say exactly what is the measure of a given cylinder.

\begin{theorem}\label{T:freq-m=general}
	Let $\mu$ be the unique invariant measure of the period-doubling subshift.
	Let $u$ be an allowed $m$-word $(m \geq 1)$, $k \geq 0$ be such that $2^k \leq m < 2^{k+1}$
	and $i$ be the least integer such that $u = w_i^{(m)}$. Then  $1 \leq i \leq 3 \cdot 2^k$, and
	
	 \begin{enumerate}
		\item if $i \leq 2^k-q$, or $q < 2^{k-1}$ and $2^k < i \leq 2^k + 2^{k-1}-q$, then $\mu \left( [u] \right) = 2/(3 \cdot 2^k)$; \label{C1:T:freq-m=general}
		\item otherwise $\mu \left( [u] \right) = 1/(3 \cdot 2^k)$. \label{C2:T:freq-m=general}
	\end{enumerate}  

\end{theorem}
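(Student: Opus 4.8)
The plan is to reduce the computation of $\mu([u])$ to deciding which of the two values $2/(3\cdot 2^k)$ and $1/(3\cdot 2^k)$ occurs, and to control this choice through the first-occurrence position $i$ via the way measure is distributed among right extensions. Throughout I fix $k$ and work with lengths $m$ in the range $2^k\le m<2^{k+1}$, so that the two possible values of $\mu$ are exactly $2/(3\cdot 2^k)$ and $1/(3\cdot 2^k)$; this dichotomy is the known fact recalled in the introduction. The starting point is the partition $[u]=\bigsqcup_{a}[ua]$ over letters $a$ with $ua$ allowed, which gives the additivity relation $\mu([u])=\sum_{a}\mu([ua])$.

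First I would establish a splitting dichotomy. Since each summand obeys $0\le\mu([ua])\le\mu([u])$ and every value lies in $\{1/(3\cdot 2^k),\,2/(3\cdot 2^k)\}$ whenever $m+1<2^{k+1}$, a word $u$ with $\mu([u])=1/(3\cdot 2^k)$ must have a unique right extension, which inherits the value $1/(3\cdot 2^k)$; and a word with $\mu([u])=2/(3\cdot 2^k)$ either has a unique extension carrying the same value, or is right special with exactly two extensions, each carrying $1/(3\cdot 2^k)$. Thus $\mu$ is non-increasing along right extensions, the value $1/(3\cdot 2^k)$ is absorbing, and a drop from the larger to the smaller value occurs precisely when one passes through a right special word. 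Combining this with the fact that every $2^k$-word has measure $2/(3\cdot 2^k)$ — which follows by counting, since Theorem~\ref{T:complexity} produces $3\cdot 2^{k-1}$ such words and $\sum\mu=1$ then forces all of them to take the larger value — yields, for $m=2^k+q$, the clean criterion
\[
\mu\bigl([w_i^{(m)}]\bigr)=\tfrac{2}{3\cdot 2^k}\iff w_i^{(\ell)}\ \text{is not right special for every}\ 2^k\le\ell<m .
\]
Because extending a word whose least occurrence is $i$ keeps $i$ as its least occurrence, the relevant prefixes are exactly the $w_i^{(\ell)}$, and the problem reduces to locating, for each length, the first occurrences of the right special words.

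The combinatorial heart is therefore the following claim, which I would prove from the self-similar structure of $\omega$: writing $m=2^k+q$, the right special $m$-words occur first at $i=2^k-q$, and, when $q<2^{k-1}$, also at $i=2^k+2^{k-1}-q$. The mechanism behind the first family becomes transparent once one uses that the level-$k$ blocks $B_0=\sigma^k(0)$ and $B_1=\sigma^k(1)$ coincide except in their last letter: a word whose right end falls exactly one position short of a block boundary reads only the common part of the straddled block, so both completions $B_0$ and $B_1$, and hence both possible next letters, actually occur, making the word right special; the word $w_{2^k-q}^{(m)}=\omega_{2^k-q}\ldots\omega_{2^{k+1}-1}$ is precisely of this kind, and one checks it is its own first occurrence. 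The second family is governed by the next level of the hierarchy, via the refinement $B_0=\sigma^{k-1}(0)\sigma^{k-1}(1)$ and $B_1=\sigma^{k-1}(0)\sigma^{k-1}(0)$, whose halves again differ only in their last letter; this produces the same ambiguity one half-block earlier, accounting for the shift by $2^{k-1}$ and for the restriction $q<2^{k-1}$. I expect this claim to be the main obstacle, as it requires tracking first occurrences through the recursive block decomposition and ruling out any earlier right special prefix.

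Granting the claim, the classification follows by bookkeeping. For $i\le 2^k$ only the first family is relevant, so $w_i^{(\ell)}$ first becomes right special at $\ell=2^{k+1}-i$; hence its measure equals $2/(3\cdot 2^k)$ exactly while $m\le 2^{k+1}-i$, that is iff $i\le 2^k-q$. For $2^k<i\le 2^k+2^{k-1}$ the second family applies and $w_i^{(\ell)}$ first becomes right special at $\ell=2^{k+1}+2^{k-1}-i$, giving the larger value iff $i\le 2^k+2^{k-1}-q$, which in turn forces $q<2^{k-1}$. Finally, by Theorem~\ref{T:complexity} the positions $2^k+2^{k-1}<i\le 3\cdot 2^k$ are not first occurrences at length $2^k$ but are created only by later splittings, so the words appearing there are born with value $1/(3\cdot 2^k)$; this also delivers the bound $1\le i\le 3\cdot 2^k$. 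Assembling the three ranges reproduces exactly the two cases of the theorem.
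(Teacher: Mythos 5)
Your strategy---start from equal measures at length $2^k$, track how measure splits along right extensions, and reduce the classification to locating the right special words and their first occurrences---is genuinely different from the paper's proof (which computes the Perron--Frobenius eigenvector of the composition matrix of the induced substitution $\zeta^{(m)}$, Lemma~\ref{P:dm-vector-from-queffelec}, and then writes each cylinder $[u]$ as a disjoint union of one or two cylinders of length $2^{k+1}$, the count coming from Lemma~\ref{L:mwords-general}). However, as written your proposal has a genuine gap at its foundation: you assume, as a ``known fact recalled in the introduction,'' that every cylinder of length $m$ with $2^k\le m<2^{k+1}$ has measure in $\{1/(3\cdot 2^k),\,2/(3\cdot 2^k)\}$. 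That dichotomy is precisely the measure-theoretic content of the statement being proved; the introduction only asserts it as derivable, via the 2-to-1 Thue--Morse correspondence, from results in the literature, and that derivation is carried out nowhere in the paper, whose declared purpose is a direct proof. Your argument leans on this unproven input twice and essentially: once to force all $2^k$-words to have measure $2/(3\cdot 2^k)$ by counting (each value $\le 2/(3\cdot 2^k)$, there are $3\cdot 2^{k-1}$ words, the sum is $1$), and once to run the splitting/absorption argument (a word of measure $1/(3\cdot 2^k)$ cannot be right special; a right special word of measure $2/(3\cdot 2^k)$ splits into two halves). The paper's replacement for this input is Lemma~\ref{P:dm-vector-from-queffelec}: every row sum of $M^m$ equals $2$, so the normalized Perron eigenvector is $\frac{2}{3m}(1,\dots,1)$, whence all cylinders of length $m=2^k$ have equal measure. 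Some such argument is indispensable in your scheme; note that once equal measures at lengths $2^k$ and $2^{k+1}$ are established, your dichotomy does follow (each $m$-cylinder is a disjoint union of length-$2^{k+1}$ cylinders and is contained in a length-$2^k$ cylinder), but it must be proved, not imported.

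The second gap is one you flag yourself: the ``combinatorial heart''---that the right special $m$-words are exactly $w_{2^k-q}^{(m)}$ and, when $q<2^{k-1}$, $w_{2^k+2^{k-1}-q}^{(m)}$, and that these indices are their first occurrences---is only sketched. This claim is essentially equivalent to the paper's Lemma~\ref{L:mwords-general}, which is proved there in detail via the function $\varphi(i,j)$; your block-boundary mechanism (note: the blocks are $\zeta^k(0),\zeta^k(1)$, not $\sigma^k(0),\sigma^k(1)$---$\sigma$ is the shift) correctly explains why these words \emph{are} right special, but completeness of the list (which needs the count $p(m+1)-p(m)$ from Theorem~\ref{T:complexity} together with distinctness of the two candidates) and the first-occurrence claims require exactly the occurrence analysis you defer. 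Granted these two inputs, your bookkeeping is sound---including the nice observation that a word whose least occurrence exceeds $3\cdot 2^{k-1}$ must have been ``born by a splitting'' (its length-$2^k$ prefix occurs earlier, and at the transition length the prefix is right special), hence carries the smaller value, which also yields the bound $i\le 3\cdot 2^k$. So the plan is viable and attractively elementary on the measure side, but the proposal as submitted postpones both the measure-theoretic and the combinatorial cores that the paper actually proves.
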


\begin{corollary}
	\label{COR:number-of-mu}
	Let $m=2^k+q$ with $k\geq 0$ and $0\leq q < 2^k$.
	Denote by $r(m)$ the number of $m$-words $u$ such that $\mu([u]) = 2/(3\cdot 2^k)$. Then
	\begin{equation*}
		r(m) = 
		\begin{cases}
			1                     &\text{if } k=0;
			\\
			3\cdot 2^{k-1}-2q      &\text{if } k\ge 1 \text{ and }\ q < 2^{k-1};
			\\
			2^{k}-q       &\text{if } k\ge 1 \text{ and }\ q \geq 2^{k-1}.
		\end{cases}
		\end{equation*}
	
\end{corollary}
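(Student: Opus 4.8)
The plan is to count, for each word length $m=2^k+q$, exactly how many of the first $3\cdot 2^k$ subwords $w_i^{(m)}$ receive the larger measure $2/(3\cdot 2^k)$, using Theorem~\ref{T:freq-m=general} as the sole input. Since that theorem already pins down the least index $i$ attaining each distinct $m$-word and says $1\le i\le 3\cdot 2^k$, the distinct $m$-words are in bijection with a set of indices $i$, and the measure of each is determined purely by where $i$ falls relative to the thresholds $2^k-q$, $2^k$, and $2^k+2^{k-1}-q$. So $r(m)$ is simply the number of admissible indices $i$ landing in the ``large measure'' region described in part~\eqref{C1:T:freq-m=general}.

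First I would dispose of the base case $k=0$, where $m=1$ forces $q=0$: the only length-one words are $0$ and $1$, and Theorem~\ref{T:freq-m=general} assigns the large measure $2/3$ to exactly one of them (the index $i=1$ region $i\le 2^k-q=1$), giving $r(1)=1$. Next I would handle $k\ge 1$ by translating condition~\eqref{C1:T:freq-m=general} into a count of integers $i$. The first interval contributes $i\in\{1,\dots,2^k-q\}$, which has $2^k-q$ elements (this is nonempty since $q<2^k$). The second interval, present only when $q<2^{k-1}$, contributes $i\in\{2^k+1,\dots,2^k+2^{k-1}-q\}$, which has $2^{k-1}-q$ elements. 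The subtlety I must check is that every such index $i$ in these ranges genuinely is the least index of a distinct word, i.e.\ that these indices are not overcounted or spurious; this follows because Theorem~\ref{T:freq-m=general} is phrased for the least such $i$, so each index in $1\le i\le 3\cdot 2^k$ that equals the least index of its word corresponds to a distinct word, and the measure assignment is a function of that least index.

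Adding the two interval counts then splits by the sign of $q-2^{k-1}$. When $q<2^{k-1}$ both intervals appear and
\[
r(m) = (2^k-q) + (2^{k-1}-q) = 3\cdot 2^{k-1} - 2q,
\]
matching the middle case. When $q\ge 2^{k-1}$ the second interval is empty, leaving only
\[
r(m) = 2^k - q,
\]
the final case. The boundary value $q=2^{k-1}$ is consistent between the two formulas ($3\cdot 2^{k-1}-2\cdot 2^{k-1}=2^{k-1}$ and $2^k-2^{k-1}=2^{k-1}$), so there is no ambiguity in which branch applies.

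The main obstacle I anticipate is the bookkeeping that the listed index intervals are exactly the large-measure indices among the \emph{distinct} words and that no index is double-counted across the two regions. Concretely I would verify that the intervals $\{1,\dots,2^k-q\}$ and $\{2^k+1,\dots,2^k+2^{k-1}-q\}$ are disjoint (immediate, since the first lies below $2^k$ and the second above it) and that together with the complementary small-measure indices they exhaust the distinct-word index set, so that $p(m)-r(m)$ recovers the small-measure count implied by Theorem~\ref{T:complexity}. A sanity check against $p(m)$ from \eqref{EQ:complexity} (the number of large- plus small-measure words must equal $p(m)$) would confirm that the region boundaries have been read off correctly; once that consistency holds, the corollary follows immediately.
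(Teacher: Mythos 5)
Your counting strategy and the final arithmetic are correct, but there is a genuine gap at the step you yourself flag as the subtlety. You need to know that \emph{every} index $i$ in the two intervals $\{1,\dots,2^k-q\}$ and $\{2^k+1,\dots,2^k+2^{k-1}-q\}$ is the least index of its word; your justification --- ``this follows because Theorem~\ref{T:freq-m=general} is phrased for the least such $i$'' --- is circular. Theorem~\ref{T:freq-m=general} only says: \emph{given} that $i$ is the least index of $u$, the measure of $[u]$ is determined by where $i$ lies. It does not say which indices in $\{1,\dots,3\cdot 2^k\}$ actually occur as least indices, so from the theorem alone you can only conclude that the map from large-measure words to their least indices is an injection into your two intervals, i.e.\ $r(m)\le (2^k-q)+(2^{k-1}-q)$ (resp.\ $\le 2^k-q$); nothing rules out a priori that some index in those intervals repeats an earlier word. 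To close the gap you must bring in Lemma~\ref{L:mwords-general}, which you explicitly excluded by declaring Theorem~\ref{T:freq-m=general} your sole input: by that lemma, a repeated occurrence of $w_i^{(m)}$ sits either at $j=i+2^{k+1}>2^{k+1}$ or at $j=i+2^{k-1}>2^k+2^{k-1}$, and in both cases $j$ exceeds $2^k+2^{k-1}-q$, hence lies outside both intervals; therefore every index in the intervals is a least index and your counts are exact.

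Alternatively, the ``sanity check'' you mention at the end can be promoted to a complete and much shorter proof that avoids all index bookkeeping: the cylinders $[u]$, $u\in\mathcal{L}_\omega^m$, partition the subshift $X$, so $\sum_u \mu([u])=1$; since by Theorem~\ref{T:freq-m=general} each $\mu([u])$ equals $2/(3\cdot 2^k)$ or $1/(3\cdot 2^k)$, this gives $2r(m)+\bigl(p(m)-r(m)\bigr)=3\cdot 2^k$, i.e.\ $r(m)=3\cdot 2^k-p(m)$, and substituting the complexity formula \eqref{EQ:complexity} from Theorem~\ref{T:complexity} yields all three cases of the corollary at once (e.g.\ for $k\ge 1$ and $q<2^{k-1}$, $r(m)=3\cdot 2^k-3\cdot 2^{k-1}-2q=3\cdot 2^{k-1}-2q$). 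Either repair works; as written, though, the key equality $r(m)=\#\{\text{indices in the large-measure region}\}$ is asserted rather than proved.
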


Precise knowledge of the invariant measure $\mu$ allows us to derive formulas
for correlation integrals (for corresponding definitions see Section~\ref{S:prelim}).
For $\eps >0$ define $m_\eps \in \NNN$ as follows:
if $\eps \geq 1$ then $m_\eps = 0$; otherwise $m_\eps$ is a unique positive integer such that
\begin{equation} \label{EQ:m-eps}
	2^{-m_\eps} \leq \eps < 2^{-m_\eps+1}. 
\end{equation}

\begin{theorem}\label{THM:correl-integ-introd}  
	Let $\eps > 0$. Then the correlation integral of the unique invariant measure $\mu$ of the period-doubling subshift is
	\begin{equation*}
	\corint(\mu, \eps) = 
	\lim\limits_{n \to \infty} \corsum(\omega, n, \eps) = 
	\begin{cases}
	1 & \text{  if } m_\eps = 0; \\
	5/9  & \text{  if } m_\eps = 1; \\
	(3\cdot 2^{k+1}-4q)/((3\cdot 2^k)^2)    & \text{  if }m_\eps \geq 2 \text{ and }q < 2^{k-1} ;\\
	(5 \cdot 2^{k}-2q)/((3\cdot 2^k)^2) & \text{  if }m_\eps \geq 2 \text{ and }q \geq 2^{k-1};
	\end{cases}
	\end{equation*}
	where $k \geq 0$ and $0 \leq q < 2^{k}$ are integers such that $m_\eps=2^k+q$.
\end{theorem}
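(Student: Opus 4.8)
The plan is to reduce the correlation integral to a finite sum of squared cylinder measures and then evaluate it using the counting results already established. First I would unwind the definitions from Section~\ref{S:prelim}: with the metric on the subshift, two points $x,y$ lie within the threshold $\eps$ exactly when they agree on their first $m_\eps$ coordinates, so the set $\{(x,y)\colon \mathrm{dist}(x,y)\le\eps\}$ decomposes as the disjoint union $\bigsqcup_{u\in\mathcal{L}_{m_\eps}(\uu)}[u]\times[u]$. Because the subshift is uniquely ergodic, $\lim_{n\to\infty}\corsum(\uu,n,\eps)$ equals the $(\mu\times\mu)$-measure of this set, giving
\[
\corint(\mu,\eps)=\sum_{u\in\mathcal{L}_{m_\eps}(\uu)}\mu([u])^2.
\]
For $m_\eps=0$ (that is, $\eps\ge 1$) the agreement condition is vacuous, the set is everything, and the value is $1$; this settles the first case.

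For $m_\eps\ge 1$ I would split the sum according to the two possible cylinder measures. By Theorem~\ref{T:freq-m=general}, every word of length $m_\eps$ has measure either $2/(3\cdot 2^k)$ or $1/(3\cdot 2^k)$, where $k$ is determined by $2^k\le m_\eps<2^{k+1}$. Writing $r=r(m_\eps)$ for the number of words of the larger measure (Corollary~\ref{COR:number-of-mu}) and $p=p(m_\eps)$ for the total number of words (Theorem~\ref{T:complexity}), the remaining $p-r$ words carry the smaller measure, so
\[
\corint(\mu,\eps)=r\left(\frac{2}{3\cdot 2^k}\right)^2+(p-r)\left(\frac{1}{3\cdot 2^k}\right)^2=\frac{3r+p}{(3\cdot 2^k)^2}.
\]
Everything now reduces to substituting the explicit values of $p$ and $r$.

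In the final step I would treat the three remaining cases separately, matching the case split in the statement. For $m_\eps=1$ we have $k=0$, $p=2$ and $r=1$, whence $\corint(\mu,\eps)=(3+2)/9=5/9$. For $k\ge 1$ and $q<2^{k-1}$ I insert $p=3\cdot 2^{k-1}+2q$ and $r=3\cdot 2^{k-1}-2q$, and the numerator $3r+p$ collapses to $3\cdot 2^{k+1}-4q$. For $k\ge 1$ and $q\ge 2^{k-1}$ the boundary value $q=2^{k-1}$ is consistently covered by either branch of Theorem~\ref{T:complexity}, so I may use $p=2^{k+1}+q$ together with $r=2^k-q$, giving $3r+p=5\cdot 2^k-2q$. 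These are precisely the claimed expressions.

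The main obstacle is really only the first, definitional, step: one must check that the threshold convention together with the definition \eqref{EQ:m-eps} of $m_\eps$ makes the cylinder decomposition exact, with no off-by-one in the length $m_\eps$; once that is pinned down the rest is bookkeeping. A minor point worth verifying is that the formulas for $p$ and $r$ agree at the boundary $q=2^{k-1}$, so that the second and third cases glue together correctly there.
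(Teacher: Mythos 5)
Your route is the same as the paper's: pass from the correlation sum to $\corint(\mu,\eps)=\sum_{u\in\mathcal{L}^{m_\eps}}\mu([u])^2$, then evaluate the sum by splitting the allowed $m_\eps$-words into those of measure $2/(3\cdot 2^k)$ and $1/(3\cdot 2^k)$ via Theorem~\ref{T:freq-m=general}, Corollary~\ref{COR:number-of-mu} and Theorem~\ref{T:complexity}. Your arithmetic, including the identity $\corint(\mu,\eps)=(3r+p)/(3\cdot 2^k)^2$ and the check that both branches of the complexity and counting formulas agree at $q=2^{k-1}$, is correct and in fact more explicit than the paper's proof, which merely cites those results.

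There is, however, one genuine gap, and it is not the off-by-one issue you flagged (that part is exact, and is even recorded in Section~\ref{S:prelim}). It is the convergence step: unique ergodicity alone does not give $\lim_{n}\corsum(\uu,n,\eps)=\corint(\mu,\eps)$ for \emph{every} $\eps>0$. The result being invoked (Pesin's, as cited in the paper) guarantees this only for all but countably many $\eps$, equivalently at continuity points of $\eps\mapsto\corint(\mu,\eps)$. Here $\corint(\mu,\cdot)$ is a step function whose discontinuities sit exactly at the dyadic values $\eps=2^{-m}$, which the theorem's statement must also cover (they satisfy $m_\eps=m$). The paper closes this gap by noting that, since $\rho$ takes only the values $0$ and $2^{-m+1}$, $m\in\NNN$, both $\corsum(\uu,n,\cdot)$ and $\corint(\mu,\cdot)$ are constant on each interval $\left[2^{-m},2^{-m+1}\right)$; convergence at some point of each such interval, which Pesin's theorem does provide, therefore forces convergence on the whole interval, hence for every $\eps$. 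Your proof needs this short additional argument; with it inserted, the rest of your write-up stands as is.
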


For simple inequalities for $\corint(\mu, \eps)$ see Corollary~\ref{C:ceps-limits}.
Theorem~\ref{THM:correl-integ-introd} together with the results from \cite{grendar2013strong}
yield asymptotic values for two of the basic measures
of recurrence quantification analysis: recurrence rate ($\rr$) and determinism ($\deter$).

\begin{theorem}[Recurrence rate of $\omega$]\label{THM:RR}  
	Let $\ell \geq 1$ and $\eps > 0$.
	Then the recurrence rate $\rr_\ell(\omega, \eps)$ exists and
	\begin{equation*}
	\rr_\ell(\omega, \eps) = 
	\begin{cases}
	 1 & \text{  if } m_\eps = 0; \\
	 5/9 & \text{  if } m_\eps = 1 \text{ and } \ell = 1; \\
	(3\cdot 2^{k+1}-4q+4\ell-4)/((3\cdot 2^k)^2)    & \text{  if }m_\eps + \ell \geq 3 \text{   and }q < 2^{k-1} ;\\
	(5 \cdot 2^{k}-2q+2\ell-2)/((3\cdot 2^k)^2) & \text{  if } m_\eps + \ell \geq 3\text{ and }q \geq 2^{k-1};
	\end{cases}
	\end{equation*}
	there, for $m_\eps + \ell \geq 3$, $k \geq 1$ and $0 \leq q < 2^k$ are unique integers
	such that $m_\eps + \ell -1 = 2^k+q$.
	
\end{theorem}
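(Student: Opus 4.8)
The plan is to reduce the diagonal-line statistics of the recurrence plot of $\uu$ to squared cylinder measures, and then to feed in the explicit correlation integrals of Theorem~\ref{THM:correl-integ-introd}. Write $R_{ij}=1$ when $d(\sigma^i\uu,\sigma^j\uu)\le\eps$ and $R_{ij}=0$ otherwise; in the dyadic metric of the shift this means exactly that $\uu$ has equal factors of length $m_\eps$ starting at positions $i+1$ and $j+1$, and consequently $\corsum(\uu,n,\eps)\to\corint(\mu,\eps)=\sum_{\length{u}=m_\eps}\mu([u])^2$. Before anything else I would dispose of the degenerate case $m_\eps=0$: then $\eps\ge 1$, the metric is bounded by $1$, so $R_{ij}\equiv 1$, every point lies on an arbitrarily long diagonal, and hence $\rr_\ell(\uu,\eps)=1$ for every $\ell$, which is the first line of the formula.

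For $m_\eps\ge 1$ the recurrence rate $\rr_\ell(\uu,\eps)$ is the asymptotic density (normalised by $n^2$) of recurrence points that belong to diagonal lines of length at least $\ell$. I would organise the counting through three quantities: $S_\ell$, the number of pairs $(i,j)$ that start a recurrent diagonal segment of length $\ell$ (that is, $R_{i+s,j+s}=1$ for $0\le s<\ell$); $D(L)$, the number of maximal diagonal lines of length exactly $L$; and $T_\ell=\sum_{L\ge\ell}D(L)$, the number of maximal lines of length $\ge\ell$. Since a maximal line of length $L$ contains $\max(L-\ell+1,0)$ segments of length $\ell$, one has $S_\ell=\sum_{L\ge\ell}(L-\ell+1)D(L)$, whence $T_\ell=S_\ell-S_{\ell+1}$ and the number of recurrence points on lines of length $\ge\ell$ equals $P_\ell=\sum_{L\ge\ell}L\,D(L)=S_\ell+(\ell-1)T_\ell$. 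The key observation is that a length-$\ell$ segment at $(i,j)$ is precisely a coincidence of the factors of length $m_\eps+\ell-1$ at positions $i+1$ and $j+1$; hence, by strict ergodicity of the period-doubling subshift (equivalently, the strong law of \cite{grendar2013strong}), $S_\ell/n^2$ converges to $c(M):=\sum_{\length{u}=M}\mu([u])^2$ with $M=m_\eps+\ell-1$. Passing to the limit in $P_\ell/n^2$ then gives the master formula
\begin{equation*}
\rr_\ell(\uu,\eps)=c(M)+(\ell-1)\bigl(c(M)-c(M+1)\bigr),\qquad M=m_\eps+\ell-1 .
\end{equation*}

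It remains to evaluate this using Theorem~\ref{THM:correl-integ-introd}, which is exactly the statement that $\corint(\mu,\eps)=c(m_\eps)$, so that $c(M)$ is known explicitly for every $M$ (take any threshold with $m_{\eps'}=M$). For $\ell=1$ the second term drops out and $\rr_1(\uu,\eps)=c(m_\eps)=\corint(\mu,\eps)$, which produces the value $5/9$ when $m_\eps=1$ and agrees with the general formula for $m_\eps\ge 2$. For $\ell\ge 2$ we have $M\ge 2$; writing $M=2^k+q$ with $k\ge 1$ and $0\le q<2^k$, I would verify---splitting according to $q<2^{k-1}$ versus $q\ge 2^{k-1}$, and checking separately the dyadic boundary values $q=2^{k-1}-1$ and $q=2^k-1$ where the index $k$ of $c(M+1)$ jumps---that
\begin{equation*}
c(M)-c(M+1)=
\begin{cases}
4/(3\cdot2^{k})^{2}, & q<2^{k-1},\\
2/(3\cdot2^{k})^{2}, & q\ge 2^{k-1}.
\end{cases}
\end{equation*}
Substituting this together with $c(M)=(3\cdot2^{k+1}-4q)/(3\cdot2^{k})^{2}$ (resp.\ $(5\cdot2^{k}-2q)/(3\cdot2^{k})^{2}$) from Theorem~\ref{THM:correl-integ-introd} into the master formula yields the numerators $3\cdot2^{k+1}-4q+4(\ell-1)$ and $5\cdot2^{k}-2q+2(\ell-1)$, which are the two remaining lines of the claimed formula.

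The main obstacle is the reduction in the second paragraph rather than the arithmetic. Two points need care: first, that the boundary contributions---maximal diagonal lines truncated by the window $\{1,\dots,n\}^2$, and the main diagonal $i=j$---are $o(n^2)$ and so do not affect the limit, which is where strict ergodicity (and the almost-sure convergence furnished by \cite{grendar2013strong}) is essential; and second, the clean identity $P_\ell=S_\ell+(\ell-1)T_\ell$, which is what converts the ``length $\ge\ell$'' count into the difference $c(M)-c(M+1)$ and thereby explains the linear-in-$\ell$ correction term distinguishing $\rr_\ell$ from the bare correlation integral.
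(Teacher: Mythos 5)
Your proposal is correct, and at its core it travels the same road as the paper: both arguments rest on the master formula $\rr_\ell(\omega,\eps)=\ell\,c(M)-(\ell-1)\,c(M+1)$ with $M=m_\eps+\ell-1$, where $c(M)$ denotes the common value of $\corint(\mu,\eps')$ for $m_{\eps'}=M$ (your form $c(M)+(\ell-1)\bigl(c(M)-c(M+1)\bigr)$ is the same expression), followed by evaluation via Theorem~\ref{THM:correl-integ-introd} with separate care at the dyadic boundary values $q=2^{k-1}-1$ and $q=2^k-1$, exactly where the paper also splits cases. The one genuine difference is how the master formula is obtained. The paper cites \cite[Proposition 1]{grendar2013strong} for the identity $\rr_\ell=\ell\,\corsum_\ell-(\ell-1)\,\corsum_{\ell+1}$ (its equation \eqref{EQ:RR-and-DET}) and converts $\corsum_\ell(\omega,n,\eps)$ into $\corsum(\omega,n,2^{-\ell+1}\eps)$ via the Bowen-metric formula \eqref{EQ:bowen-for-pd}; you instead re-derive that identity from scratch by decomposing the recurrence plot into maximal diagonal lines, through $S_\ell=\sum_{L\ge\ell}(L-\ell+1)D(L)$, $T_\ell=S_\ell-S_{\ell+1}$ and $P_\ell=S_\ell+(\ell-1)T_\ell$, and your observation that a length-$\ell$ diagonal segment is a coincidence of factors of length $m_\eps+\ell-1$ is precisely the content of \eqref{EQ:bowen-for-pd}. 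Your route is more self-contained (it does not lean on the external RQA identity) at the price of having to check that window truncation and the line of identity contribute only $o(n^2)$, which you flag and which is indeed harmless; the paper's citation-based route avoids that bookkeeping. The closing arithmetic is organized differently but equivalently: you compute the increments $c(M)-c(M+1)$, equal to $4/(3\cdot2^k)^2$ for $q<2^{k-1}$ and $2/(3\cdot2^k)^2$ for $q\ge 2^{k-1}$ (your boundary checks at $q=2^{k-1}-1$ and $q=2^k-1$ are correct), while the paper runs the four cases $q<2^{k-1}-1$, $q=2^{k-1}-1$, $2^{k-1}\le q<2^k-1$, $q=2^k-1$ directly; both give the stated numerators.
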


\begin{theorem}[Determinism of $\omega$] \label{THM:DET=1}
	Let $\ell \geq 2$ and $\eps > 0$. Then
	$\deter_\ell(\omega, \eps)$ exists,
	$$\deter_\ell(\omega, \eps)=
	\frac{\rr_\ell(\omega, \eps)}{\rr_1(\omega, \eps)}$$
	and
	$$ \lim\limits_{\eps \to 0} \deter_\ell(\omega, \eps) = 1.$$
	Moreover, $\deter_\ell(\omega, \eps) = 1 $ if and only if one of the following three cases happens:
	\begin{enumerate}[label=(\alph*)]
		\item \label{Case1-in-THM:DET=1}
		$\eps \geq 1$;
		
		\item \label{Case2-in-THM:DET=1}
		$2^k \leq m_\eps < m_\eps + \ell - 1 < 2^k+2^{k-1}$ 
		for some $k \in \NNN$;
		
		\item \label{Case3-in-THM:DET=1}
		$2^k+2^{k-1} \leq m_\eps < m_\eps + \ell - 1 < 2^{k+1}$
		for some $k \in \NNN$.
		
	\end{enumerate}

\end{theorem}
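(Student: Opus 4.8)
The plan is to reduce the whole statement to the explicit recurrence-rate formula of Theorem~\ref{THM:RR}. The first assertion is essentially formal: by the general results of \cite{grendar2013strong}, the determinism $\deter_\ell(\omega,\eps)$ is the quotient of the asymptotic density of recurrence points lying on diagonal segments of length at least $\ell$ and the density of all recurrence points. Theorem~\ref{THM:RR} already guarantees that these two densities exist and equal $\rr_\ell(\omega,\eps)$ and $\rr_1(\omega,\eps)$ respectively, and the value of $\rr_1(\omega,\eps)$ in that theorem is strictly positive for every $\eps>0$. Hence the limit defining $\deter_\ell(\omega,\eps)$ exists and equals $\rr_\ell(\omega,\eps)/\rr_1(\omega,\eps)$, and everything that remains is a case analysis of this quotient.

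For the two remaining assertions I would write $M=m_\eps$ and $M'=m_\eps+\ell-1$ and observe that, by Theorem~\ref{THM:RR}, $\rr_1(\omega,\eps)$ is governed by the dyadic position of $M$ and $\rr_\ell(\omega,\eps)$ by that of $M'$, that is, by which block $[2^k,2^{k+1})$ and which of its halves $[2^k,2^k+2^{k-1})$, $[2^k+2^{k-1},2^{k+1})$ the relevant length falls into. I would split into three possibilities: (i) $M$ and $M'$ lie in the same half-block; (ii) they straddle a midpoint $2^k+2^{k-1}$; (iii) they straddle a block boundary $2^{k+1}$, treating the degenerate small values $m_\eps\in\{0,1\}$ by hand.

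The crux is case~(i). Substituting $M'=M+\ell-1$ into the formula of Theorem~\ref{THM:RR}, the increment of $q$ produced by the shift $\ell-1$ is exactly cancelled by the additive correction $+4\ell-4$ in the first-half regime, and by $+2\ell-2$ in the second-half regime; consequently $\rr_\ell(\omega,\eps)=\rr_1(\omega,\eps)$ and $\deter_\ell(\omega,\eps)=1$. This cancellation is precisely what singles out conditions \ref{Case2-in-THM:DET=1} and \ref{Case3-in-THM:DET=1} (with \ref{Case1-in-THM:DET=1} accounting for $m_\eps=0$, where both rates equal $1$), yielding the ``if'' direction. For the straddle cases I would parametrize the first length by $q_1=2^{k-1}-s$ in case~(ii) and by $q_1=2^{k}-s$ in case~(iii), with $1\le s\le \ell-1$, and compute the quotient directly; it collapses to
\[
\frac{4\cdot 2^{k}+2s}{4\cdot 2^{k}+4s}\qquad\text{and}\qquad \frac{12\cdot 2^{k}+4s}{12\cdot 2^{k}+8s},
\]
respectively. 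Both are strictly less than $1$ because $s\ge 1$, which gives the ``only if'' direction, and both tend to $1$ as $k\to\infty$ because $s\le\ell-1$ is bounded. Since $\eps\to 0$ forces $m_\eps\to\infty$ and hence $k\to\infty$, while case~(i) already delivers the exact value $1$, the limit $\lim_{\eps\to 0}\deter_\ell(\omega,\eps)=1$ follows.

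I expect the main obstacle to be bookkeeping rather than anything conceptual. One must track carefully how the decomposition $M'=2^{k'}+q$ jumps from block $k$ to block $k+1$ in case~(iii), so that the denominator $(3\cdot2^{k'})^2$ changes by a factor of $4$; verify that the half-block conditions coincide exactly with \ref{Case2-in-THM:DET=1}--\ref{Case3-in-THM:DET=1} (in particular that the block $k=0$, consisting of the single length $1$, contributes nothing once $\ell\ge 2$); and dispose of the exceptional small lengths $m_\eps\in\{0,1\}$, together with the few small $k$ for which a straddle may skip a half-block, separately. The only genuinely substantive point is the exact cancellation in case~(i); once it is noticed, both the equality set and the limit drop out of elementary arithmetic.
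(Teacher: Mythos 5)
Your proposal is correct and takes essentially the same route as the paper: the paper likewise derives $\deter_\ell(\omega,\eps)=\rr_\ell(\omega,\eps)/\rr_1(\omega,\eps)$ from \eqref{EQ:RR-and-DET} and the positivity of $\rr_1$, then does the same case analysis on the positions of $m_\eps$ and $m_\eps+\ell-1$ (organized there as $k'=k$, $k'=k+1$, $k'\ge k+2$ rather than by half-blocks), with the identical exact cancellation giving $\deter_\ell=1$ in the same-half-block case and quotients equal to yours, strictly below $1$ but tending to $1$, in the two straddle cases. The residual degenerate cases you defer (straddles skipping a half-block, i.e.\ small $k$ relative to $\ell$) are treated just as tersely in the paper's case $k'\ge k+2$, so your plan matches the paper's proof in both substance and level of detail.
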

	
Figure~\ref{FIG:rr2,det2} illustrates $\rr_2$ and $\deter_2$ of the period-doubling sequence.

\begin{remark}
	We trivially have that, for every $\eps < 1$,
	$$\lim\limits_{\ell \to \infty} \deter_\ell(\omega, \eps) = 0.$$
\end{remark}

\begin{figure}
	\centering
	\begin{subfigure}[b]{0.4\textwidth}
		\includegraphics[scale=0.7]{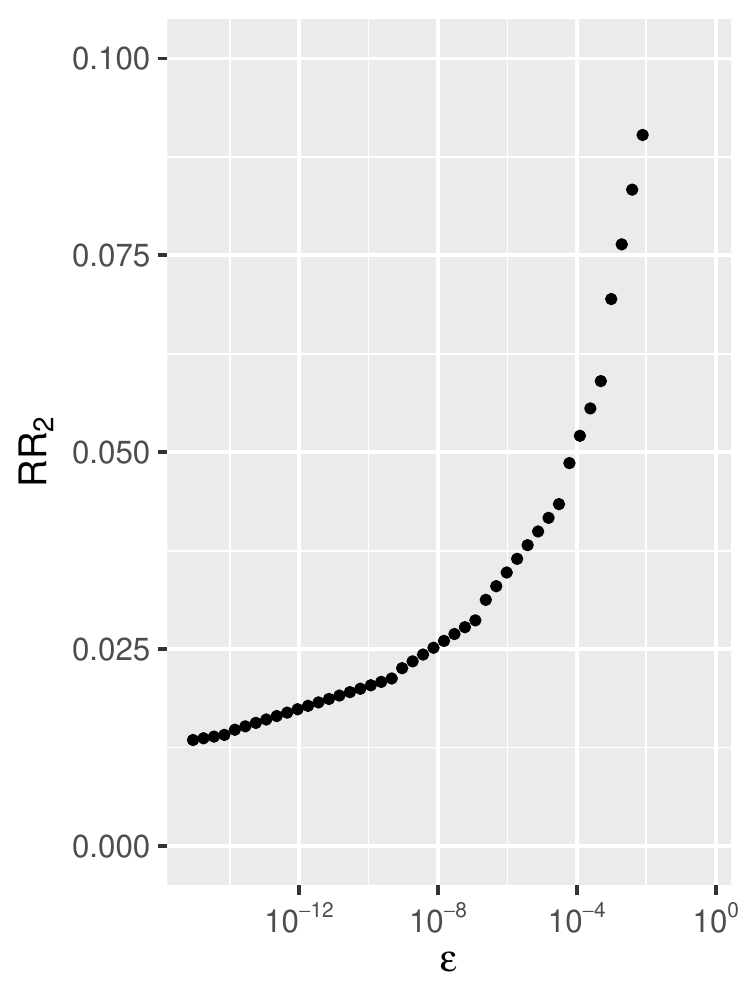}
	\end{subfigure}
	\begin{subfigure}[b]{0.4\textwidth}
		\includegraphics[scale=0.7]{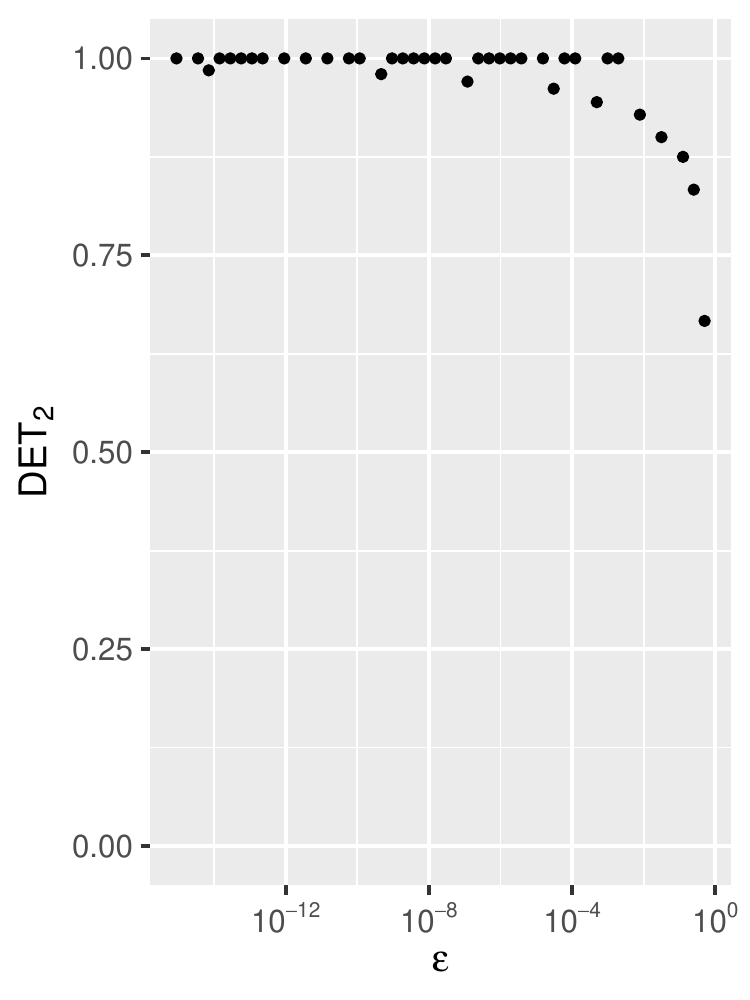}
	\end{subfigure}
\caption{$\rr_2$ and $\deter_2$ of $\omega$.}
\label{FIG:rr2,det2}
\end{figure}

Theorems~\ref{THM:correl-integ-introd}, \ref{THM:RR} and \ref{THM:DET=1} are
stated for embedding dimension $1$. For general embedding dimension, see Subsection~\ref{SUBS:emb-dim}.
See also \cite{vspitalsky2018recurrence} for formulas for other recurrence quantifiers.

This paper is organized as follows.
Preliminaries are given in Section 2.
Complexity of the period-doubling sequence (Theorem~\ref{T:complexity}) is derived in Section 3
as a consequence of some other properties of this sequence.
In Section 4 we give the proof of Theorem~\ref{T:freq-m=general}.
In Section 5 we apply these results to prove Theorems~\ref{THM:correl-integ-introd}, \ref{THM:RR} and \ref{THM:DET=1}.
Moreover, we consider a generalization of our results to arbitrary embedding dimension.

%%%%%%%%%%%%%%%%%%%%%%%%%%%%%%%%%%%%%%%%%%%%%%%%%%%%%%%%%%%%%%%%%%%%%%%%%%%%%%%
%%%%%%%%%%%%%%%%%%%%%%%%%%%%%%%%%%%%%%%%%%%%%%%%%%%%%%%%%%%%%%%%%%%%%%%%%%%%%%%
%%%%%%%%%%%%%%%%%%%%%%%%%%%%%%%%%%%%%%%%%%%%%%%%%%%%%%%%%%%%%%%%%%%%%%%%%%%%%%%
%%%%%%%%%%%%%%%%%%%%%%%%%%%%%%%%%%%%%%%%%%%%%%%%%%%%%%%%%%%%%%%%%%%%%%%%%%%%%%%
%%%%%%%%%%%%%%%%%%%%%%%%%%%%%%%%%%%%%%%%%%%%%%%%%%%%%%%%%%%%%%%%%%%%%%%%%%%%%%%
\section{Preliminaries}\label{S:prelim}

The set of positive integers $\{1,2,\dots\}$ is denoted by $\NNN$.
The set $\AAa=\{0,1\}$ is called an \emph{alphabet}.
Put $\AAa^*=\bigcup_{k\ge 0} \AAa^k$; 
$\AAa^*$ endowed with concatenation is a monoid.
Members of $\AAa^*$
are called \emph{words}.
A \emph{word of length $m$}, or an \emph{$m$-word} ($m\ge 1$) is any $v=v_1\dots v_m$ 
from $\AAa^m$ ($m \geq 0$);  $v_i$ is the \emph{$i$-th letter} of $v$.
The \emph{empty word} (the unique word of length $0$) is denoted by $\eps$.
A \emph{subword of $v=v_1\dots v_n$ starting at the $i$-th letter} is any word
$v_i v_{i+1}\dots v_{n'}$ with $i\le n'\le n$.

The \emph{period-doubling} substitution $\zeta$ is defined as follows:
\begin{equation}\label{EQ:subst}
  \zeta:\AAa\to \AAa^*, \qquad
  \zeta(0) = 01,\qquad
  \zeta(1) = 00.
\end{equation}
The substitution $\zeta$ induces a morphism (denoted also by $\zeta$)
of the monoid $\AAa^*$ by putting $\zeta(\eps)=\eps$ and
$\zeta(w) = \zeta (w_1)\zeta(w_2)\dots\zeta(w_n)$ for any nonempty word $w=w_1 w_2 \dots w_n$.
Likewise, $\zeta$ induces a map
(again denoted by $\zeta$) from $\AAa^\NNN$ to $\AAa^\NNN$ by
\begin{equation*}
  \zeta(x)=\zeta(x_1)\zeta(x_2)\dots
  \qquad
  \text{for } x=(x_n)_{n\in\NNN} \in \AAa^\NNN.
\end{equation*}
The iterates $\zeta^k$ ($k\ge 1$) of $\zeta$ are defined inductively by $\zeta^1=\zeta$
and $\zeta^{k}=\zeta\circ\zeta^{k-1}$ for $k\ge 2$.

\emph{Period-doubling sequence} $\uu=0100\,0101\,0100\,0100\,0100\,\dots$ is
the unique fixed point of $\zeta: \AAa^\NNN \to \AAa^\NNN $. Recall that, for every $i\in\NNN$,
$\uu_i$ is equal to $k_i \operatorname{mod} 2$, where $k_i$ is the largest integer such that $2^{k_i}$ divides $i$.
For every integers $m,i\ge 1$, the $m$-word starting
at the position $i$ is denoted by $w_i^{(m)}$:
\begin{equation*}
    w_i^{(m)} = w_i w_{i+1} \dots w_{i+m-1}.
\end{equation*}

For $m=2^k$ ($k\ge 1$) put
\begin{equation*}
  0^{(m)} = \zeta^k(0)
  \qquad\text{and}\qquad
  1^{(m)} = \zeta^k(1)\,;
\end{equation*}
note that both $0^{(m)}$ and $1^{(m)}$ are words of length $m$.

Any subword of $\uu$ (including the empty one) is called \emph{allowed}.
The language $\mathcal{L}_\omega$ of $\uu$ is the set of all allowed words.
The set of all allowed $m$-words is denoted by $\mathcal{L}_\omega^m$.
\emph{Complexity function of $\uu$} is the map $p=p_\uu:\NNN\to\NNN$ such that, for every $m\in\NNN$,
$p(m)=\# \mathcal{L}_\omega^m$ is the number of allowed $m$-words.

Note that for every $m=2^k$ ($k \ge 0$) we have $ 0^{(1)} = 0,  1^{(1)} = 1,$ and
\begin{equation}\label{EQ:0(m)=0(m)1(m), 1=00}
	0^{(2m)} = 0^{(m)}1^{(m)},\qquad 1^{(2m)} = 0^{(m)}0^{(m)}.
\end{equation}

A \emph{measure-theoretical dynamical system} is a system $(X, \mathcal{B}, \mu, f)$,
where $X$ is a set, $\mathcal{B}$ is $\sigma$-algebra over $X$, $\mu$ is a probability measure
and $f:X \to X$ is a $\mu$-measurable and $\mu$-invariant transformation,
i.e.~$f^{-1}(B) \in \mathcal{B}$ and $\mu(f^{-1}(B)) = \mu(B)$ for every $B \in \mathcal{B}$. 
The system $(X, \mathcal{B}, \mu, f)$ is \emph{ergodic} if  $ \mu(B) = 0$ or $\mu(B) = 1$
for every $B \in \mathcal{B}$ with $f^{-1}(B)=B$.

A pair $(X, f)$ is called a \emph{topological dynamical system}
if $X$ is a compact metric space and $f:X \to X$ is a continuous map.
A dynamical system $(X, f)$ is \emph{minimal} if there is no proper subset $M \subsetneq X$
which is nonempty, closed and $f$-invariant (a set $M$ is $f$-invariant if $f(M) \subseteq M$).
Let $\mathcal{B}_X$ denote the system of all Borel subsets of $X$.
A probability measure $\mu$ is said to be \emph{invariant} if $\mu(f^{-1}(A)) = \mu(A)$ for every $A \in \mathcal{B}_X$;
that is, $(X, \mathcal{B}_X, \mu, f)$ is a measure-theoretical dynamical system.
By Krylov-Bogolyubov theorem, for every $(X, f)$ there exists an invariant measure $\mu$.
System $(X, f)$  is called \emph{uniquely ergodic} if such a measure $\mu$ is unique.
Moreover, if $(X, f)$ is also minimal, we call it \emph{strictly ergodic}.

Metric $\rho$ on $\Sigma=\AAa^\NNN$ is defined
for every $\alpha, \beta \in \Sigma$ by $\rho(\alpha, \beta)=0$ if $\alpha = \beta$,
and $\rho(\alpha, \beta)=2^{-k+1}$ if $\alpha \neq \beta$, where $k=\min\{i:\alpha_i \neq \beta_i\}$.
Note that $(\Sigma, \rho)$ is a compact metric space.
For an $m$-word $v$ we define the \emph{cylinder} $[v]$ by
$[v]=\{ \alpha \in \Sigma : \alpha_i = v_i \text{ for } i \leq m\}$.
Cylinders form a basis of the topology and $[v]=B(x,\eps)$
for every $x\in[v]$ and $\eps=2^{-\length{v}}$,
where $B(x,\eps)$ denotes the closed ball with the center $x$ and radius $\eps$.
A \emph{shift} is the map
$\sigma: \Sigma \to \Sigma $ defined by $\sigma(\alpha_1 \alpha_2 \alpha_3 \ldots) = \alpha_2 \alpha_3 \ldots$
For each nonempty closed $\sigma$-invariant subset $Y \subseteq \Sigma$,
the restriction of $(\Sigma, \sigma)$ to $Y$ is called a \emph{subshift}.
The closure of the orbit $(\sigma^n(\alpha))_{n \geq 0}$ of any $\alpha \in \Sigma$ defines a subshift,
as it is always nonempty, closed and $\sigma$-invariant set.
\emph{Period-doubling subshift} is the orbit closure of the period-doubling sequence.

Let $(X, \sigma)$ be a subshift over $A$, $\rho$ be the metric defined above and $\mu$ be a $\sigma$-invariant measure.
\emph{Correlation integral} of $\mu$ is defined for $\eps > 0$ as follows:
$$ \corint(\mu, \eps) =  \mu \times \mu \, \{ (x, y) \in X \times X \colon \rho(x, y) \leq \eps \}.$$
If $2^{-m}\le \eps < 2^{-m+1}$ then clearly
$$
\corint(\mu,\eps) 
= \sum_{v\in\AAa^m} \mu\big([v]\big)^2.
$$

For $x \in X, n \in \NNN$ and $\eps >0$, \emph{correlation sum} is defined by
$$ \corsum(x, n, \eps) = \frac{1}{n^2} \# \left\{ (i, j)\colon 0 \leq i,j <n, \, \, \rho(\sigma^i(x), \sigma^j(x)) \leq \eps \right\}.$$

For uniquely ergodic systems, $\lim_n \corsum(x,n,\eps)=\corint(\mu,\eps)$ for every but countably many $\eps > 0$ and every $x\in X$ \cite{pesin1993rigorous}.

For any $\ell \geq 1$ consider Bowen's metric $$\rho_\ell (\alpha, \beta) =  \max\limits_{0 \leq k< \ell}\rho \left( \sigma^k (\alpha), \sigma^k (\beta) \right). $$
An easy computation gives that we always have
\begin{equation}
\label{EQ:bowen-for-pd}
	\rho_\ell (\alpha, \beta) =
	\begin{cases}
		1 &\text{ if } \alpha_i \neq \beta_i \text{ for some } 1 \leq i \leq \ell, \\
		2^{\ell-1}\rho(\alpha, \beta) &\text{ if }  \alpha_i = \beta_i \text{ for all } 1 \leq i \leq \ell.
	\end{cases}
\end{equation}
We can now define
\begin{equation}
\label{EQ:C_ell}
 \corsum_\ell (x, n, \eps) = \frac{1}{n^2} \# \left\{ (i, j)\colon 0 \leq i,j < n , \, \, \rho_\ell(\sigma^i(x), \sigma^j(x)) \leq \eps \right\}.
\end{equation}

Recurrence quantification analysis (\cite{zbilut1992embeddings}, see also \cite{marwan2007recurrence, webber2015recurrence}) gives several complexity measures quantifying 
structures in recurrence plots, which are useful for visualization of recurrence.
Two of them are \emph{recurrence rate} ($\rr$) and \emph{determinism} ($\deter$).
By \cite[Proposition 1]{grendar2013strong}, recurrence rate and determinism can be expressed by correlation sums as follows:
\begin{equation}
\label{EQ:RR-and-DET}
\rr_\ell = \ell \cdot \corsum_\ell - (\ell-1) \cdot \corsum_{\ell+1} \qquad
\text{ and } \qquad \deter_\ell = \frac{\rr_\ell}{\rr_1},
\end{equation}
where $\ell$ is the minimal required line length; arguments $x, n, \eps$ are omitted
and we consider embedding dimension $1$.
For general embedding dimension $d$ see Subsection~\ref{SUBS:emb-dim}.

If the limit of $\corsum_\ell (x, n, \eps)$ for $n \to \infty$ exists,
it is denoted by $\corsum_\ell (x, \eps)$.
Analogously we define $\rr_\ell (x, \eps)$ and $\deter_\ell (x, \eps)$.

%%%%%%%%%%%%%%%%%%%%%%%%%%%%%%%%%%%%%%%%%%%%%%%%%%%%%%%%%%%%%%%%%%%%%%%%%%%%%%%
%%%%%%%%%%%%%%%%%%%%%%%%%%%%%%%%%%%%%%%%%%%%%%%%%%%%%%%%%%%%%%%%%%%%%%%%%%%%%%%
%%%%%%%%%%%%%%%%%%%%%%%%%%%%%%%%%%%%%%%%%%%%%%%%%%%%%%%%%%%%%%%%%%%%%%%%%%%%%%%
%%%%%%%%%%%%%%%%%%%%%%%%%%%%%%%%%%%%%%%%%%%%%%%%%%%%%%%%%%%%%%%%%%%%%%%%%%%%%%%
%%%%%%%%%%%%%%%%%%%%%%%%%%%%%%%%%%%%%%%%%%%%%%%%%%%%%%%%%%%%%%%%%%%%%%%%%%%%%%%
\section{Complexity of the period-doubling sequence}\label{S:complexity}

%%%%%%%%%%%%%%%%%%%%%%%%%%%%%%%%%%%%%%%%%%%%%%%%%%%%%%%%%%%%%%%%%%%%%%%%%%%%%%%
\subsection{Length $m=2^k$}

In this section, we prove Theorem \ref{T:complexity} in the special case
when the length $m$ is a power of $2$. We start with two lemmas.
The first one follows by induction using \eqref{EQ:0(m)=0(m)1(m), 1=00} and
the second one is a direct consequence of $\zeta^k(\omega) = \omega$.

\begin{lemma}\label{L:0m1m-differs-at-final-letter}
  For any $m=2^k$ ($k\ge 0$), the $m$-words $0^{(m)},1^{(m)}$ differ exactly at the $m$-th letter:
  \begin{equation*}
    (0^{(m)})_i = (1^{(m)})_i \quad \text{for } i<m,
    \qquad
    (0^{(m)})_m \ne (1^{(m)})_m.
  \end{equation*}
  Moreover, if $k$ is even then $(0^{(m)})_m=0$ and $(1^{(m)})_m=1$, and if $k$ is odd then
  $(0^{(m)})_m=1$ and $(1^{(m)})_m=0$.
\end{lemma}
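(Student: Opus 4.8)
The plan is to argue by induction on $k$, using the splitting relations \eqref{EQ:0(m)=0(m)1(m), 1=00}. The statement really bundles together three assertions — agreement on the first $m-1$ letters, disagreement at the $m$-th letter, and the exact value of that last letter according to the parity of $k$ — and I will carry all three through the induction simultaneously, since the value of the final letter at stage $k$ is precisely what drives the disagreement at stage $k+1$. For the base case $k=0$ we have $m=1$, $0^{(1)}=0$ and $1^{(1)}=1$: the range $i<m$ is vacuous, the two words differ at their only (hence $m$-th) letter, and since $k=0$ is even we correctly get $(0^{(1)})_1=0$ and $(1^{(1)})_1=1$.

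For the inductive step I assume the claim for $m=2^k$ and consider $2m=2^{k+1}$. By \eqref{EQ:0(m)=0(m)1(m), 1=00} we have $0^{(2m)}=0^{(m)}1^{(m)}$ and $1^{(2m)}=0^{(m)}0^{(m)}$, so both words share the common prefix $0^{(m)}$ and therefore agree on positions $1,\dots,m$. Their suffixes are $1^{(m)}$ and $0^{(m)}$ respectively; by the inductive hypothesis these agree on their first $m-1$ letters (positions $m+1,\dots,2m-1$ of the full words) and differ at their $m$-th letter (position $2m$). Hence $0^{(2m)}$ and $1^{(2m)}$ agree for every $i<2m$ and differ at $i=2m$, which is the first part of the claim for $2m$.

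It remains to track the last letter, where $(0^{(2m)})_{2m}=(1^{(m)})_m$ and $(1^{(2m)})_{2m}=(0^{(m)})_m$; that is, passing to the suffixes interchanges the two final letters relative to stage $k$. So if $k$ is even, so that $(0^{(m)})_m=0$ and $(1^{(m)})_m=1$ by hypothesis, then $(0^{(2m)})_{2m}=1$ and $(1^{(2m)})_{2m}=0$, which is exactly the prescription for the odd index $k+1$; the case $k$ odd is symmetric, and the induction closes. I expect no genuine obstacle here: the only point requiring care is to keep the three assertions coupled throughout, and to observe that taking the suffix swaps the roles of $0$ and $1$ in the final slot, which is precisely what generates the parity alternation.
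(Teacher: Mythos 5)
Your proof is correct and follows the same route the paper indicates: induction on $k$ via the splitting relations \eqref{EQ:0(m)=0(m)1(m), 1=00}, with the common prefix $0^{(m)}$ forcing agreement on the first half and the swapped suffixes $1^{(m)}$ versus $0^{(m)}$ producing both the disagreement at the last letter and the parity alternation. The paper merely asserts this induction without detail, so your write-up is simply a fleshed-out version of its intended argument.
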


\begin{lemma}\label{L:seq-in-mAlphabet}
  Let $m=2^k$ ($k\ge 0$).
  Then the period-doubling sequence $\uu$ can be written in the form $\uu = (\uu_1)^{(m)} (\uu_2)^{(m)} \dots $.
  That is, for every $i\in\NNN$,
  \begin{equation*}
    w_{(i-1)m+1}^{(m)} =
    \begin{cases}
      0^{(m)}  &\text{if } \uu_i=0,
    \\
      1^{(m)}  &\text{if } \uu_i=1.
    \end{cases}
  \end{equation*}
\end{lemma}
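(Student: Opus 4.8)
The plan is to read off the statement directly from the fixed-point property together with the fact that $\zeta$ is a substitution of constant length $2$. First I would observe that, since $\uu$ is the unique fixed point of $\zeta$, a trivial induction on $k$ gives $\zeta^k(\uu) = \uu$ for every $k \ge 0$ (the base case $k=0$ being $\zeta^0 = \mathrm{id}$, and the inductive step applying $\zeta$ to both sides of $\zeta^{k-1}(\uu)=\uu$). Thus it suffices to understand how $\zeta^k$ acts on $\uu$ letter by letter.

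Next I would expand $\zeta^k(\uu)$ using that $\zeta^k$ is a morphism of sequences: $\zeta^k(\uu) = \zeta^k(\uu_1)\,\zeta^k(\uu_2)\,\zeta^k(\uu_3)\dots$. By the very definition of the words $0^{(m)},1^{(m)}$ for $m=2^k$ (namely $0^{(m)}=\zeta^k(0)$ and $1^{(m)}=\zeta^k(1)$, and for $k=0$ the convention $0^{(1)}=0$, $1^{(1)}=1$), each factor $\zeta^k(\uu_i)$ equals $(\uu_i)^{(m)}$, which is $0^{(m)}$ when $\uu_i=0$ and $1^{(m)}$ when $\uu_i=1$.

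The one point requiring a moment's care is the alignment of blocks, and this is where the constant-length property enters. Since $\length{\zeta(0)}=\length{\zeta(1)}=2$, we get $\length{\zeta^k(a)}=2^k=m$ for every single letter $a\in\AAa$; hence each factor $(\uu_i)^{(m)}$ is a word of length exactly $m$. Therefore the concatenation $(\uu_1)^{(m)}(\uu_2)^{(m)}\dots$ splits $\uu=\zeta^k(\uu)$ into consecutive blocks of equal length $m$, the $i$-th block occupying positions $(i-1)m+1$ through $im$. Reading off that $i$-th block yields precisely $w_{(i-1)m+1}^{(m)} = (\uu_i)^{(m)}$, which is the claimed identity. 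There is essentially no obstacle here: the entire content is the unwinding of the definitions of a constant-length substitution and its fixed point, so the proof is a short direct verification rather than an argument with a genuine difficulty.
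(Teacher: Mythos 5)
Your proof is correct and follows exactly the route the paper intends: the paper dispatches this lemma by remarking that it ``is a direct consequence of $\zeta^k(\omega)=\omega$,'' and your argument simply spells out that consequence (expanding $\zeta^k(\uu)=\zeta^k(\uu_1)\zeta^k(\uu_2)\dots$ and using that $\zeta$ has constant length $2$ to align the blocks of length $m=2^k$). Nothing is missing; this is the same proof, written out in full.
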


\begin{lemma}\label{L:2words}
  For the period-doubling sequence $\uu$,
  $p(1)=2$ and $p(2)=3$.
  Moreover, the allowed $1$-words are $w_1^{(1)}=0$ and $w_2^{(1)}=1$,
  and the allowed $2$-words are $w_1^{(2)}=01$, $w_2^{(2)}=10$, and $w_3^{(2)}=00$.
\end{lemma}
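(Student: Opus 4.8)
The plan is to verify both assertions by direct inspection of the first four letters of $\uu$ together with one structural observation that excludes the word $11$.

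First I would read off the initial letters from the arithmetic description $\uu_i = k_i \bmod 2$, where $k_i$ is the largest integer with $2^{k_i}\mid i$. For odd $i$ we have $k_i=0$, hence $\uu_i=0$; computing $k_2=1$ and $k_4=2$ gives $\uu_1\uu_2\uu_3\uu_4 = 0100$. Since $\uu_1=0$ and $\uu_2=1$, both letters of $\AAa$ occur, so $p(1)=2$, and the least occurrences are $w_1^{(1)}=0$ and $w_2^{(1)}=1$. Likewise the three $2$-words $w_1^{(2)}=\uu_1\uu_2=01$, $w_2^{(2)}=\uu_2\uu_3=10$, and $w_3^{(2)}=\uu_3\uu_4=00$ are pairwise distinct, and inspection of positions $1,2,3$ shows each occurs there for the first time (position $1$ carries $01$, position $2$ carries $10$, and neither equals $00$, so $w_3^{(2)}$ is the least occurrence of $00$).

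The only point needing an actual argument is that $p(2)=3$ rather than $4$, i.e. that $11$ is not allowed. Here I would use the parity of positions of the $1$'s: if $\uu_i=1$ then $k_i\ge 1$, so $2\mid i$ and $i$ is even; consequently $i+1$ is odd, whence $\uu_{i+1}=0$. Thus every $1$ is immediately followed by a $0$, so $11$ never appears in $\uu$. Since the only $2$-words over $\AAa$ are $00,01,10,11$ and exactly the first three occur, we conclude $p(2)=3$.

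The whole lemma is elementary; the only mild obstacle is the exclusion of $11$, which reduces entirely to the observation that every odd-indexed letter of $\uu$ equals $0$. Alternatively, Lemma~\ref{L:seq-in-mAlphabet} with $m=2$ expresses $\uu$ as a concatenation of the blocks $0^{(2)}=\zeta(0)=01$ and $1^{(2)}=\zeta(1)=00$, both of which begin with $0$; hence every odd-indexed letter is $0$ and the same conclusion follows without invoking the arithmetic formula.
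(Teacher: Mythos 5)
Your proof is correct and follows essentially the same route as the paper: the paper's proof also reduces everything to excluding the word $11$, which it dismisses by the same observation that $\omega_{2i-1}=0$ for every $i$ (i.e.\ every odd-indexed letter is $0$). You simply spell out the details the paper leaves implicit, namely reading off $\uu_1\uu_2\uu_3\uu_4=0100$ and deducing the odd-position fact from the arithmetic description $\uu_i=k_i \bmod 2$.
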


\begin{proof}
	We only need to prove that the word $11$ is not allowed. But this immediately follows from the fact that $\omega_{2i-1} = 0$ for every $i$.
\end{proof}

\begin{lemma}\label{L:complexity-lower-bound-2k}
  Let $m=2^k$ ($k\ge 1$). Then the words $w_i^{(m)}$ ($1\le i\le \frac{3}{2}m$) are pairwise distinct.
\end{lemma}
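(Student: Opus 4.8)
The plan is to argue by induction on $k$, using the self-similarity $\uu=\zeta(\uu)$ to reduce the statement for length $M=2^{k+1}$ to the statement for length $m=2^{k}$. The base case $k=1$ is exactly Lemma~\ref{L:2words}, which exhibits the three distinct $2$-words $w_1^{(2)},w_2^{(2)},w_3^{(2)}$. For the inductive step I assume that the words $w_a^{(m)}$ with $1\le a\le \tfrac32 m$ are pairwise distinct, and I must prove the same for the $3m=\tfrac32 M$ words $w_i^{(M)}$ with $1\le i\le 3m$.

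First I would split these windows according to the parity of the starting index $i$, exploiting that $\zeta$ has constant length $2$ and that $\uu_{2t-1}\uu_{2t}=\zeta(\uu_t)$. For odd $i=2a-1$ the window is block-aligned, so $w_{2a-1}^{(M)}=\zeta(w_a^{(m)})$; since $\zeta$ is injective on words (each length-$2$ block $01$ or $00$ is uniquely decodable) and $a$ ranges over $\{1,\dots,\tfrac32 m\}$, the inductive hypothesis makes these pairwise distinct. For even $i=2a$ the window is shifted by one letter, and here the cleanest bookkeeping is to read it position by position: each \emph{even} local position is the first letter of some $\zeta(\cdot)$, hence equals $0$, while the \emph{odd} local positions spell out $\overline{\uu_a}\,\overline{\uu_{a+1}}\cdots\overline{\uu_{a+m-1}}$, i.e.\ the complement of $w_a^{(m)}$. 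Thus $w_{2a}^{(M)}$ is determined by $w_a^{(m)}$, and again $a$ runs over $\{1,\dots,\tfrac32 m\}$, so distinctness within the even family also follows from the inductive hypothesis.

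The step I expect to be the main obstacle is ruling out collisions \emph{between} the two families, i.e.\ an odd-start window equal to an even-start one. Here the parity bookkeeping pays off: an odd-start window carries $0$ in every odd local position, whereas an even-start window carries $0$ in every even local position, so a word common to both families would have to be $0^{M}$. I would exclude this by noting $M=2^{k+1}\ge 4$ and that $\uu$ contains no factor $0000$: any four consecutive positions contain two consecutive even indices $2t,2t+2$, and $\uu_{2t}=\uu_{2t+2}=0$ would force $\uu_t=\uu_{t+1}=1$ via $\uu_{2s}=\overline{\uu_s}$, contradicting the absence of the factor $11$ established in Lemma~\ref{L:2words}.

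Putting the three parts together yields $\tfrac32 m+\tfrac32 m=\tfrac32 M$ pairwise distinct windows, which closes the induction. The only genuinely delicate points are verifying the exact ranges of $a$ (so that both families are indexed precisely by $\{1,\dots,\tfrac32 m\}$ and hence covered by the inductive hypothesis) and the cross-family disjointness just described; the two within-family arguments are then immediate from the injectivity of $\zeta$ and the complement description of the even windows.
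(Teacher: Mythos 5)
Your proof is correct, but it follows a genuinely different route from the paper. The paper also inducts on $k$, but its inductive step splits the long window \emph{spatially}: it writes $w_i^{(2m)}=w_i^{(m)}w_{i+m}^{(m)}$ and combines the induction hypothesis with the repetition relations \eqref{EQ:wi-m} (i.e.\ $w_i^{(m)}=w_{i-m/2}^{(m)}$, $w_{i-2m}^{(m)}$, $w_{i-3m}^{(m)}$ on the appropriate ranges), which it first derives from Lemmas~\ref{L:0m1m-differs-at-final-letter} and~\ref{L:seq-in-mAlphabet}; equality of two long windows then forces equalities of both halves that are incompatible with the class structure these relations impose. You instead split the \emph{set of windows} by parity of the starting index and desubstitute: odd-start windows are $\zeta(w_a^{(m)})$, even-start windows are $\overline{\uu_a}\,0\,\overline{\uu_{a+1}}\,0\cdots\overline{\uu_{a+m-1}}\,0$, both families are injectively parametrized by $a\in\{1,\dots,\tfrac32 m\}$ via the induction hypothesis, and a cross-family collision would force the all-zero word, excluded because $\uu$ has no factor $0000$ (via $\uu_{2s}=\overline{\uu_s}$ and the absence of $11$). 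All steps check out, including the index ranges (since $3m$ is even, both parity classes of $[1,3m]$ have exactly $\tfrac32 m$ elements) and the $0000$ exclusion. The trade-off: the paper's route establishes \eqref{EQ:wi-m}, which it reuses heavily later (in Lemma~\ref{L:mwords-general}, the proof of Theorem~\ref{T:complexity}, and Lemma~\ref{P:dm-vector-from-queffelec}), so the work is amortized; your route is more self-contained, needs only the fixed-point identities $\uu_{2s-1}=0$ and $\uu_{2s}=\overline{\uu_s}$, and is an instance of the standard recognizability/desubstitution technique that adapts readily to other constant-length substitutions.
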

\begin{proof}
	We start by showing that, for $\frac{3}{2}m < i \leq 4m$,
\begin{equation}\label{EQ:wi-m}
    w_i^{(m)} =
    \begin{cases}
      w_{i-m/2}^{(m)}   &\text{if } \frac{3}{2}m < i\le 2m;
    \\
      w_{i-2m}^{(m)}   &\text{if } 2m < i\le 3m;
    \\
      w_{i-3m}^{(m)}   &\text{if } 3m < i\le 4m.
    \\
    \end{cases}
\end{equation}
To see this, realize that $\omega = 0^{(m)}1^{(m)}0^{(m)}0^{(m)}\,0^{(m)} \ldots$ by Lemma~\ref{L:seq-in-mAlphabet}.
Hence, by Lemma~\ref{L:0m1m-differs-at-final-letter},
$w_i^{(m)} = w_{i-2m}^{(m)}$ for $2m < i \leq 3m$ and  $w_i^{(m)} = w_{i-3m}^{(m)}$ for $3m < i \leq 4m$.
Furthermore, $\omega = 0^{(n)}1^{(n)}0^{(n)}0^{(n)}\,0^{(n)}1^{(n)} \ldots$, where $n=m/2$.
So analogously, $w_i^{(m)} = w_{i-n}^{(m)}$ for $\frac{3}{2}m < i \leq 2m$.

We now proceed by induction on $k$. For $k=1$, the claim follows from Lemma~\ref{L:2words}.
Assume now that the claim is valid for some $k \geq 1$; we are going to show that it is valid for $k+1$.
Put $m=2^k$. Since $w_i^{(2m)} = w_i^{(m)} w_{i+m}^{(m)}$,
\eqref{EQ:wi-m} and the induction hypothesis yield that
the words $w_i^{(2m)}$ for $1 \leq i \leq 3m$ are pairwise distinct.
\end{proof}

\begin{lemma}\label{L:mwords-in-0m1m}
	Let $m=2^k$ ($k\ge 1$) and $v$ be any allowed $m$-word. Then
	exactly one of the following is true:
	\begin{enumerate}
		\item \label{enum: L:mwords-in-0m1m - caseA}
		$v$ is a subword of $0^{(m)}1^{(m)}$ starting at the $i$-th letter with $i\le m$;
		
		\item \label{enum: L:mwords-in-0m1m - caseB}
		$v$ is a subword of $1^{(m)}0^{(m)}$ starting at the $i$-th letter with $i\le m/2$.
	\end{enumerate}
\end{lemma}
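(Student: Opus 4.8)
The plan is to slice $\omega$ into consecutive blocks of length $m$ and to analyse how an arbitrary allowed $m$-word straddles two neighbouring blocks. By Lemma~\ref{L:seq-in-mAlphabet}, $\omega$ is the concatenation $(\omega_1)^{(m)}(\omega_2)^{(m)}\dots$, so its $j$-th block is $0^{(m)}$ or $1^{(m)}$ according as $\omega_j$ is $0$ or $1$. An arbitrary allowed $m$-word is $w_t^{(m)}$; writing $t=(j-1)m+r$ with $1\le r\le m$, I note that it occupies positions $t,\dots,t+m-1=jm+r-1<(j+1)m$, hence it is a length-$m$ subword, at offset $r\le m$, of the concatenation of blocks $j$ and $j+1$. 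Since this block coding reproduces $\omega$, whose allowed $2$-words are $01$, $10$, $00$ by Lemma~\ref{L:2words}, the pair of neighbouring blocks is one of $0^{(m)}1^{(m)}$, $1^{(m)}0^{(m)}$, $0^{(m)}0^{(m)}$. Thus every allowed $m$-word is an offset-$r$ subword ($r\le m$) of one of these three words of length $2m$.

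Next I would dispose of the two pairs that are not named in the statement. By Lemma~\ref{L:0m1m-differs-at-final-letter}, $0^{(m)}$ and $1^{(m)}$ coincide except at their last letter, so $0^{(m)}0^{(m)}$ and $0^{(m)}1^{(m)}$ share their first $2m-1$ letters; an offset-$r$ subword with $r\le m$ ends at position $r+m-1\le 2m-1$ and therefore never sees the differing final letter, so it equals the offset-$r$ subword of $0^{(m)}1^{(m)}$, which is case~(\ref{enum: L:mwords-in-0m1m - caseA}). The offset-$r$ subwords of $1^{(m)}0^{(m)}$ with $r\le m/2$ are case~(\ref{enum: L:mwords-in-0m1m - caseB}) by definition, so only the offsets $m/2<r\le m$ remain. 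Putting $n=m/2$, formula~\eqref{EQ:0(m)=0(m)1(m), 1=00} gives $1^{(m)}0^{(m)}=0^{(n)}0^{(n)}0^{(n)}1^{(n)}$, whose prefix $0^{(n)}0^{(n)}0^{(n)}$ has period $n$; together with the fact that $1^{(n)}$ agrees with $0^{(n)}$ off its last letter (position $2m$), this shows that the letter at position $p$ equals the letter at position $p-n$ for all $n<p\le 2m-1$. An offset-$r$ subword with $n<r\le 2n$ occupies positions $r,\dots,r+2n-1\le 4n-1=2m-1$, all in this range, so it coincides with the offset-$(r-n)$ subword, and $r-n\le m/2$ puts it into case~(\ref{enum: L:mwords-in-0m1m - caseB}). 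This periodicity reduction is the only delicate point and is the step I expect to be the main obstacle.

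It remains to check that the two cases are mutually exclusive. The offset-$r$ subwords of $0^{(m)}1^{(m)}$ for $1\le r\le m$ are exactly $w_1^{(m)},\dots,w_m^{(m)}$, because $\omega$ begins with $0^{(m)}1^{(m)}$; likewise the offset-$r$ subwords of $1^{(m)}0^{(m)}$ for $1\le r\le m/2$ are exactly $w_{m+1}^{(m)},\dots,w_{3m/2}^{(m)}$, because $1^{(m)}0^{(m)}$ forms the second and third blocks of $\omega$. By Lemma~\ref{L:complexity-lower-bound-2k} the words $w_1^{(m)},\dots,w_{3m/2}^{(m)}$ are pairwise distinct, and the two cases correspond to the disjoint index sets $\{1,\dots,m\}$ and $\{m+1,\dots,3m/2\}$; hence no allowed $m$-word can satisfy both. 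Combined with the preceding paragraphs, which show that every allowed $m$-word satisfies at least one of the two cases, this gives that exactly one of them holds.
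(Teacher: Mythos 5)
Your proof is correct and follows essentially the same route as the paper's: decompose $\omega$ into blocks via Lemma~\ref{L:seq-in-mAlphabet}, rule out the pair $0^{(m)}0^{(m)}$ with Lemma~\ref{L:0m1m-differs-at-final-letter}, reduce offsets in $1^{(m)}0^{(m)}=0^{(n)}0^{(n)}0^{(n)}1^{(n)}$ by periodicity, and get exclusivity from the distinctness of $w_1^{(m)},\dots,w_{3m/2}^{(m)}$ (Lemma~\ref{L:complexity-lower-bound-2k}). The only difference is that you spell out explicitly the period-$n$ shifting argument that the paper compresses into a single sentence.
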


\begin{proof}
	We start by showing that at least one of \eqref{enum: L:mwords-in-0m1m - caseA}, \eqref{enum: L:mwords-in-0m1m - caseB} is true. If $v \in \{ 0^{(m)}, 1^{(m)} \}$, we are done.
	Otherwise, by Lemma~\ref{L:seq-in-mAlphabet},  $v$ is a subword of $0^{(m)}0^{(m)}$ or $0^{(m)}1^{(m)}$ or $1^{(m)}0^{(m)}$,
	starting at an index $j \leq m$.
	By Lemma~\ref{L:0m1m-differs-at-final-letter}, $v$ is a subword of $0^{(m)}1^{(m)}$ or $1^{(m)}0^{(m)}$.
	In the former case we have \eqref{enum: L:mwords-in-0m1m - caseA}.
	In the latter case, we have \eqref{enum: L:mwords-in-0m1m - caseB}
	since $1^{(m)}0^{(m)} = 0^{(n)}0^{(n)}0^{(n)}1^{(n)}$
	by \eqref{EQ:0(m)=0(m)1(m), 1=00}, where $n=m/2$.
	
	Moreover, $\omega$ starts with $0^{(m)}1^{(m)}0^{(m)}$,
	so $v=w_i^{(m)}$ for
	some $1 \leq i \leq \frac{3}{2}m$.
	By Lemma~\ref{L:complexity-lower-bound-2k} the words $w_i^{(m)}$ ($1\le i\le \frac{3}{2}m$)
	are pairwise distinct,
	so only one of \eqref{enum: L:mwords-in-0m1m - caseA} and \eqref{enum: L:mwords-in-0m1m - caseB} is true.
\end{proof}

\begin{proposition} \label{PROP:p(m)-powers-of-2}
	Let $m=2^k$ ($k\ge 1$). Then $ p(m) = \frac{3}{2}m$ and $\mathcal{L}_\omega^{m} = \{ w_i^{(m)}: 1 \leq i \leq \frac{3}{2}m \}$.
\end{proposition}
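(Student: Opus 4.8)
The plan is to combine the lemmas just established into a clean counting argument. The goal is the double statement that $p(m) = \frac{3}{2}m$ and that $\mathcal{L}_\omega^{m} = \{w_i^{(m)} : 1 \leq i \leq \frac{3}{2}m\}$ for every $m = 2^k$ with $k \geq 1$. Both halves follow once we know two things: that the listed words are pairwise distinct (giving the lower bound $p(m) \geq \frac{3}{2}m$ and injectivity), and that every allowed $m$-word appears among them (giving the upper bound and the set equality).

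The first ingredient is already in hand: Lemma~\ref{L:complexity-lower-bound-2k} states precisely that the words $w_i^{(m)}$ for $1 \leq i \leq \frac{3}{2}m$ are pairwise distinct, so the set $\{w_i^{(m)} : 1 \leq i \leq \frac{3}{2}m\}$ has exactly $\frac{3}{2}m$ elements. It remains to show this set is all of $\mathcal{L}_\omega^m$. One inclusion is trivial, since each $w_i^{(m)}$ is by definition a subword of $\omega$ and hence allowed. For the reverse inclusion, I would invoke Lemma~\ref{L:mwords-in-0m1m}: an arbitrary allowed $m$-word $v$ is either a subword of $0^{(m)}1^{(m)}$ starting at position $i \leq m$, or a subword of $1^{(m)}0^{(m)}$ starting at position $i \leq m/2$. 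Because $\omega$ begins with $0^{(m)}1^{(m)}0^{(m)}$ (by Lemma~\ref{L:seq-in-mAlphabet}) and, reading further, with $0^{(n)}0^{(n)}0^{(n)}1^{(n)}$-type blocks where $n = m/2$, each such $v$ can be identified with some $w_i^{(m)}$ having $1 \leq i \leq \frac{3}{2}m$. In the first case of Lemma~\ref{L:mwords-in-0m1m} the starting positions $i \leq m$ land directly in the initial block $0^{(m)}1^{(m)}$, i.e. $v = w_i^{(m)}$ with $i \leq m$; in the second case the occurrence of $1^{(m)}0^{(m)}$ inside $\omega$ sits at positions $m+1$ through $3m$, so a subword starting within its first $m/2$ letters corresponds to some $w_i^{(m)}$ with $m < i \leq \frac{3}{2}m$. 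Either way $v$ lies in the listed set.

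With both inclusions established, $\mathcal{L}_\omega^m = \{w_i^{(m)} : 1 \leq i \leq \frac{3}{2}m\}$, and then $p(m) = \#\mathcal{L}_\omega^m = \frac{3}{2}m$ follows immediately from the pairwise distinctness supplied by Lemma~\ref{L:complexity-lower-bound-2k}. The main obstacle, such as it is, is purely bookkeeping: one must match up the two index ranges from Lemma~\ref{L:mwords-in-0m1m} with the correct positions inside $\omega$ so that they partition exactly the integers $1 \leq i \leq \frac{3}{2}m$ with no overlap and no gap. The ``exactly one of'' clause in Lemma~\ref{L:mwords-in-0m1m}, together with the distinctness from Lemma~\ref{L:complexity-lower-bound-2k}, is what guarantees this matching is a bijection rather than merely a surjection, so no word is double-counted. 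All the genuinely substantive work has been pushed into the preceding lemmas, and this proposition is essentially their assembly.
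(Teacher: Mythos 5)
Your proof is correct and takes essentially the same approach as the paper: Lemma~\ref{L:complexity-lower-bound-2k} provides the $\frac{3}{2}m$ pairwise distinct words (the lower bound), and Lemma~\ref{L:mwords-in-0m1m} shows every allowed $m$-word occurs among $w_1^{(m)},\dots,w_{3m/2}^{(m)}$ (the upper bound), which together give both the count and the set equality. The paper's proof merely cites the two lemmas and concludes; yours additionally spells out the positional bookkeeping matching the two cases of Lemma~\ref{L:mwords-in-0m1m} to the index ranges $1 \leq i \leq m$ and $m < i \leq \frac{3}{2}m$, which is exactly the argument left implicit there.
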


\begin{proof}
	Lemma~\ref{L:mwords-in-0m1m} gives $ p(m) \leq \frac{3}{2}m$.
	On the other hand, $ p(m) \geq \frac{3}{2}m$ by Lemma~\ref{L:complexity-lower-bound-2k}.
	The description of $\mathcal{L}_\omega^m$ now follows from Lemma~\ref{L:complexity-lower-bound-2k}.
\end{proof}

\begin{remark}
	For $m=2^k \, (k \geq 1)$ we also have $\mathcal{L}_\omega^{m} = \{ w_i^{(m)}: \frac{3}{2}m < i \leq 3m \}$;
	this follows from \eqref{EQ:wi-m}.
\end{remark}

\subsection{General length $m$}

\begin{lemma}\label{L:mwords-general}
  Let $m=2^k+q$, where $k\ge 1$ and $1\le q<2^k$. Let $1\le i<j\le 3\cdot 2^k$. 
  Then $    w_i^{(m)}=w_j^{(m)} $
  if and only if exactly one of the following conditions holds:
  \begin{enumerate}
    \item $1\le i\le 2^k-q$ and $j=i+2^{k+1}$; \label{C1:mwords-general}
    \item $q<2^{k-1}$, $2^k+1\le i\le 3\cdot2^{k-1}-q$, and $j=i+2^{k-1}$. \label{C2:mwords-general}
  \end{enumerate}  
Consequently, for every $1 \leq i < 3 \cdot 2^k$ there is at most one $j$ such that $i < j \leq 3 \cdot 2^k$ and $w_i^{(m)} = w_j^{(m)}$.
\end{lemma}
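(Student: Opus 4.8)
Let me plan how to establish when $w_i^{(m)} = w_j^{(m)}$ for $m = 2^k + q$.

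The plan is to reduce the full $m$-word equality to a question about $2^k$-words together with a short ``tail'' comparison. Write $M=2^k$ and note $m=M+q$ with $q\ge 1$, so $w_i^{(M)}$ is the length-$M$ prefix of $w_i^{(m)}$; hence $w_i^{(m)}=w_j^{(m)}$ forces $w_i^{(M)}=w_j^{(M)}$. First I would therefore classify all coincidences $w_i^{(M)}=w_j^{(M)}$ with $1\le i<j\le 3M$. Using the periodicity relations \eqref{EQ:wi-m} together with Proposition~\ref{PROP:p(m)-powers-of-2}, every index in $[1,3M]$ reduces to the fundamental domain $[1,\tfrac32 M]$, on which the $M$-words are pairwise distinct. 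A counting check ($3M$ positions, exactly $\tfrac32 M$ distinct $M$-words) then shows that the equivalence classes are precisely the pairs $\{i,i+2M\}$ with $i\le M$ (call this type~A) and $\{i,i+M/2\}$ with $M<i\le \tfrac32 M$ (type~B); these $\tfrac32 M$ disjoint pairs partition $[1,3M]$, so there are no further coincidences. Thus $w_i^{(m)}=w_j^{(m)}$ can only occur with $(i,j)$ of type~A ($j=i+2^{k+1}$) or type~B ($j=i+2^{k-1}$).

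Next, in each type I would reduce the remaining equality to matching the $q$ letters following the common $M$-prefix, i.e.\ to $\omega_{i+M+t}=\omega_{j+M+t}$ for $0\le t\le q-1$. The key tool is to write $\omega$ as a concatenation of the blocks $0^{(\cdot)}$, $1^{(\cdot)}$ via Lemma~\ref{L:seq-in-mAlphabet} (with block length $M$ for type~A and $2^{k-1}$ for type~B), and to locate, using Lemma~\ref{L:0m1m-differs-at-final-letter}, the positions where the two compared regions disagree: since $0^{(\cdot)}$ and $1^{(\cdot)}$ differ only in their last letter, the two regions disagree exactly at certain block boundaries. The equality $w_i^{(m)}=w_j^{(m)}$ then becomes the purely arithmetic condition that the comparison window of length $q$ avoids all such boundaries.

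Carrying this out: for type~A the two regions are the offset-$i$ windows in $1^{(M)}0^{(M)}$ and $0^{(M)}0^{(M)}$, which differ only at position $M$; avoiding it is equivalent to $i\le M-q=2^k-q$, which is exactly \eqref{C1:mwords-general}. For type~B, writing $n=2^{k-1}$ and $i=2n+s$ with $1\le s\le n$, one compares $\omega$ at positions $4n+s+t$ and $5n+s+t$; these lie in the block patterns $0^{(n)}1^{(n)}0^{(n)}\ldots$ and $1^{(n)}0^{(n)}1^{(n)}\ldots$, which disagree exactly at the multiples of $n$. The window $[s,s+q-1]$ avoids every multiple of $n$ iff $s\le n-q$, which in particular forces $q<n=2^{k-1}$ and $i\le 3\cdot 2^{k-1}-q$; this is exactly \eqref{C2:mwords-general}. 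Conversely, the same computations show that both conditions are sufficient, establishing the equivalence.

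Finally, the ``Consequently'' clause is immediate: \eqref{C1:mwords-general} requires $i\le 2^k-q$ while \eqref{C2:mwords-general} requires $i\ge 2^k+1$, so the two ranges of $i$ are disjoint, and within each case $j$ is uniquely determined by $i$; hence each $i$ has at most one partner $j$. I expect the main obstacle to be the bookkeeping needed to prove both directions at once---pinning down exactly which block boundary each comparison window can reach and verifying the boundary cases of the inequalities---rather than any conceptual difficulty; the completeness of the type~A/type~B classification (the counting argument) is the one step that must be argued with some care.
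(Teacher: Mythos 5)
Your proposal is correct and follows essentially the same route as the paper: both first reduce via \eqref{EQ:wi-m} (and the pairwise distinctness from Lemma~\ref{L:complexity-lower-bound-2k}) to the two candidate pair types $j=i+2^{k+1}$ and $j=i+2^{k-1}$, and then decide equality by locating the first disagreement of the two windows at a block boundary, using Lemmas~\ref{L:seq-in-mAlphabet} and~\ref{L:0m1m-differs-at-final-letter}. The paper packages this last step as an explicit first-disagreement function $\varphi(i,j)$ with a shift-decrement property, while you compare the length-$q$ tails directly against the boundary positions; this is the same computation in different notation, and your explicit counting argument for completeness of the type~A/type~B classification is a welcome (if minor) sharpening of the paper's terse citation of \eqref{EQ:wi-m}.
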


\begin{proof}
	For $1 \leq i < j \leq 3 \cdot 2^k$ put
	$$ \varphi(i,j) = \min \{ 1 \leq h \leq 2^{k+1}: \, \omega_{i+h-1} \neq \omega_{j+h-1} \}; $$
	it is well-defined by Lemma \ref{L:complexity-lower-bound-2k} applied to the length $2^{k+1}$.
	Note that 
	\begin{equation} \label{EQ: w-eq-iff}
		w_i^{(m)} = w_j^{(m)} \text{ if and only if } \varphi(i, j) > m .
	\end{equation}
	It is clear that
	\begin{equation} \label{EQ: if-phi-rhen}
		\text{if } \varphi(i, j) \geq 2 \quad \text{then } \varphi(i+1, j+1) = \varphi(i, j)-1.
	\end{equation}
	
	Fix $1 \leq i < j \leq 3 \cdot 2^k$ and assume that $w_i^{(m)} = w_j^{(m)}$;
	we are going to show that either \eqref{C1:mwords-general}
	or \eqref{C2:mwords-general} is true.
	Since $m \geq 2^k$ we have that  $w_i^{(2^k)} = w_j^{(2^k)}$
	and, by \eqref{EQ:wi-m}, exactly one of the following is true:
	  \begin{enumerate}[label=(\alph*)]
		\item $1\le i\le 2^k$ and $j=i+2^{k+1}$; \label{C1:proof-mwords-general}
		\item $2^k < i\le 3\cdot2^{k-1}$, and $j=i+2^{k-1}$. \label{C2:proof-mwords-general}
	\end{enumerate}  

	Assume that  \ref{C1:proof-mwords-general} is true.
	Since $\varphi(1, 2^{k+1}+1) = 2^{k+1}$ and $j=i+2^{k+1} \leq 3 \cdot 2^k$, \eqref{EQ: if-phi-rhen}~implies
	\begin{equation} \label{EQ: phi-minus-a}
		\varphi(i, j) = 2^{k+1}-(i-1).
	\end{equation}
	Since $w_i^{(m)} = w_j^{(m)}$ by assumption, \eqref{EQ: w-eq-iff} implies $2^{k+1}-(i-1) > m$,
	that is $i \leq 2^k-q$. So we have \eqref{C1:mwords-general}.
	
	If \ref{C2:proof-mwords-general} is true then,
	by Lemma~\ref{L:seq-in-mAlphabet}, $w_{1+2^k}^{(2^{k+1})} = 0^{(n)}0^{(n)}0^{(n)}1^{(n)}$
	and  $w_{1+2^k+2^{k-1}}^{(2^{k+1})} = 0^{(n)}0^{(n)}1^{(n)}0^{(n)}$ for $n=2^{k-1}$.
	From Lemma~\ref{L:0m1m-differs-at-final-letter} it follows that $\varphi(1+2^k, 1+2^k+2^{k-1}) = 3\cdot 2^{k-1}$.
	Since $2^k < i\le 3\cdot2^{k-1}$ and $j=i+2^{k-1}$, \eqref{EQ: if-phi-rhen} yields
	\begin{equation} \label{EQ: phi-minus-b}
		\varphi(i,j) = \varphi(p+2^k, p+2^k+2^{k-1}) = 3\cdot 2^{k-1}-(p-1),
		\,\, \text{where } p=i-2^k
	\end{equation}
	(notice that $0 < p \leq 2^{k-1}$). By the assumption $w_i^{(m)} = w_j^{(m)}$ and so, by \eqref{EQ: w-eq-iff},
	 $\varphi(i, j) > m = 2^k + q $.
	 Now \eqref{EQ: phi-minus-b} gives $3 \cdot 2^{k-1}-q \geq i$, so we have \eqref{C2:mwords-general}.
	
	Now assume that one of the conditions \eqref{C1:mwords-general}, \eqref{C2:mwords-general} holds.
	If \eqref{C1:mwords-general} holds we have $w_i^{(m)}=w_j^{(m)} $,
	since \eqref{EQ: phi-minus-a} implies $\varphi(i, j) > m$.
	Similarly, if \eqref{C2:mwords-general} is true then $\varphi(i, j) > m $ by \eqref{EQ: phi-minus-b}, so again $w_i^{(m)}=w_j^{(m)} $.

\end{proof}

Now we are ready to prove Theorem~\ref{T:complexity}.

\begin{proof}[Proof of Theorem \ref{T:complexity}]
		It is clear from Lemma~\ref{L:2words} that \eqref{EQ:complexity} is true for $k=0$, so we may assume that $k > 0$.
		Let $n=2^{k+1}$.
		By Proposition~\ref{PROP:p(m)-powers-of-2},
		$$ p(m) = p(n) - \# \{ (i, j):
			1 \leq i < j \leq 3 \cdot 2^k, w_i^{(m)} = w_j^{(m)} \} .$$
		If $q \geq 2^{k-1}$ then only \eqref{C1:mwords-general} from Lemma~\ref{L:mwords-general}
		occurs, consequently, $p(m) = p(n) - (2^k-q) = 4 \cdot 2^{k-1}+q$.
		Otherwise, both \eqref{C1:mwords-general} and \eqref{C2:mwords-general}
		from Lemma~\ref{L:mwords-general} occur and so $p(m) = p(n)-(2^k-q)-(2^{k-1}-q) = 3 \cdot 2^{k-1}+2q$.
\end{proof}

From Theorem~\ref{T:complexity} we immediately have that
$$ p(m+1) - p(m) \in \{1,2\} \quad \text{ for every } m$$
and
$$ \frac{3}{2}m \leq p(m) \leq \frac{5}{3}m \quad \text{ for every } m \geq 2.$$

%%%%%%%%%%%%%%%%%%%%%%%%%%%%%%%%%%%%%%%%%%%%%%%%%%%%%%%%%%%%%%%%%%%%%%%%%%%%%%%
%%%%%%%%%%%%%%%%%%%%%%%%%%%%%%%%%%%%%%%%%%%%%%%%%%%%%%%%%%%%%%%%%%%%%%%%%%%%%%%
%%%%%%%%%%%%%%%%%%%%%%%%%%%%%%%%%%%%%%%%%%%%%%%%%%%%%%%%%%%%%%%%%%%%%%%%%%%%%%%
%%%%%%%%%%%%%%%%%%%%%%%%%%%%%%%%%%%%%%%%%%%%%%%%%%%%%%%%%%%%%%%%%%%%%%%%%%%%%%%
%%%%%%%%%%%%%%%%%%%%%%%%%%%%%%%%%%%%%%%%%%%%%%%%%%%%%%%%%%%%%%%%%%%%%%%%%%%%%%%
\section{Invariant measure of the period-doubling subshift}\label{S:invMeasure}

Let $(X, \sigma)$ be the \emph{period-doubling subshift};
i.e.~$X$ is the orbit closure of $\omega$ and $\sigma: X \to X$ is the left shift.
By \cite{michel1976stricte} (see also \cite[Proposition 5.2 and Theorem 5.6]{queffelec2010substitution}),
$(X, \sigma)$ is strictly ergodic.

Denote the unique invariant measure of $(X, \sigma)$ by $\mu$.
By \cite{oxtoby1952ergodic},
$$ \mu \left( [v] \right) = \lim\limits_{n \to \infty} \frac{1}{n} \# \{ 1 \leq i \leq n : w_i^{(m)} = v \}$$
for every $v \in \mathcal{L}^m$.
In this section we prove Theorem~\ref{T:freq-m=general} which gives an explicit formula for measures of cylinders $[v]$.
We follow \cite[Sections 5.3-5.4]{queffelec2010substitution}.
Fix an integer $m \in \NNN$ and recall that $\mathcal{L}^m$ is
the set of all $m$-words in $\omega$.
Define a substitution $\zeta^{(m)}$ over alphabeth $\mathcal{L}^m$ as follows:
for $u \in \mathcal{L}^m$, write $\zeta(u) = y_1 y_2 \ldots y_{2m}$,
and define
$\zeta^{(m)}(u) = (y_1 \ldots y_m)(y_2 \ldots y_{m+1}).$
Let $M^m$ be the composition matrix of $\zeta^{(m)}$,
that is $M^m$ is a $p(m) \times p(m)$ non-negative matrix such that,
for $u, v \in \mathcal{L}^m$, $(M^m)_{uv}$ is
the number of occurencies of $v$ in $\zeta^{(m)}(u)$.
Trivially every member of $M^m$ belongs to $\{0,1,2\}$.

By \cite[Corollary 5.2]{queffelec2010substitution}, the Perron-Frobenius eigenvalue
of $M^m$ is $\lambda = 2$.
Furthermore, if $d^m = (d_u^m)_{u \in \mathcal{L}^m}$ is the unique
normalized eigenvector of $M^m$ corresponding to $\lambda$,
then $\mu([u]) = d_u^m$ by \cite[Corollary 5.4]{queffelec2010substitution}, see also
\cite[Proposition 1]{frid1998frequency}.

%%%%%%%%%%%%%%%%%%%%%%%%%%%%%%%%%%%%%%%%%%%%%%%%%%%%%%%%%%%%%%%%%%%%%%%%%%%%%%%

\begin{lemma}\label{P:dm-vector-from-queffelec}
  Let $m=2^k\, (k \geq 1)$. Then $d^m = \frac{2}{3m}(1, 1, \ldots, 1)$.
  Consequently, $\mu \left( [v] \right) = \frac{2}{3m}$ for every allowed $m$-word $v$.
\end{lemma}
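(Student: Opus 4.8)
The plan is to verify that the all-ones vector is, up to normalization, the Perron--Frobenius eigenvector of the composition matrix $M^m$ for $m=2^k$, and then invoke the result cited from Queff\'elec that $\mu([v])=d_v^m$. Since by Proposition~\ref{PROP:p(m)-powers-of-2} we have $p(m)=\frac32 m$, the normalized all-ones vector is exactly $\frac{2}{3m}(1,1,\ldots,1)$, so it suffices to show that $(1,1,\ldots,1)$ is a right eigenvector of $M^m$ with eigenvalue $\lambda=2$. Because the Perron--Frobenius eigenvalue is simple (the substitution $\zeta^{(m)}$ is primitive, as the original substitution $\zeta$ is), any positive eigenvector for $\lambda=2$ must be a scalar multiple of $d^m$; hence exhibiting one such positive eigenvector identifies it completely.

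First I would unwind the eigenvalue equation $M^m \mathbf{1} = 2\cdot\mathbf{1}$ into a purely combinatorial statement. The $u$-th coordinate of $M^m\mathbf{1}$ is $\sum_{v\in\mathcal L^m}(M^m)_{uv}$, which by the definition of the composition matrix is the total number of occurrences of $m$-words in $\zeta^{(m)}(u)=(y_1\ldots y_m)(y_2\ldots y_{m+1})$, where $\zeta(u)=y_1\ldots y_{2m}$. Each of the two blocks $(y_1\ldots y_m)$ and $(y_2\ldots y_{m+1})$ is a single $m$-word (each contributes exactly one factor of length $m$ to the count, since $\zeta^{(m)}(u)$ is by construction the concatenation of these two $m$-words), so the row sum is exactly $2$ for every $u\in\mathcal L^m$. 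Thus $M^m\mathbf{1}=2\mathbf{1}$ holds regardless of the internal structure of $u$, simply because $\zeta$ is a substitution of constant length $2$ and hence $\zeta^{(m)}$ maps each letter to a word of length exactly $2$ over the alphabet $\mathcal L^m$.

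The one genuine point requiring care is that both blocks $(y_1\ldots y_m)$ and $(y_2\ldots y_{m+1})$ are themselves allowed $m$-words, i.e.~legitimate elements of the alphabet $\mathcal L^m$; otherwise the count of ``$m$-words appearing'' could differ from the count of alphabet-letters appearing. This is immediate: $\zeta(u)$ is a subword of $\omega=\zeta(\omega)$ (since $u\in\mathcal L^m$ is a subword of $\omega$), so every length-$m$ factor of $\zeta(u)$ is allowed, and in particular both displayed blocks lie in $\mathcal L^m$. I expect this to be the main (though still routine) obstacle: confirming that the row-sum of $M^m$ really counts exactly the two factor-positions and not spurious overlaps, which reduces to the observation that $\zeta^{(m)}(u)$ has length $2m$ and its only two constituent $m$-words in the sense of the definition are the two concatenated blocks.

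Finally, having established $M^m\mathbf 1 = 2\mathbf 1$, I would normalize: since $d^m$ is the \emph{unique} normalized Perron--Frobenius eigenvector and $\mathbf 1$ is a strictly positive eigenvector for $\lambda=2$, simplicity of the eigenvalue forces $d^m$ to be the positive scalar multiple of $\mathbf 1$ summing to $1$. With $p(m)=\frac32 m$ entries this gives $d^m=\frac{2}{3m}(1,\ldots,1)$, and the cited identity $\mu([v])=d_v^m$ yields $\mu([v])=\frac{2}{3m}$ for every allowed $m$-word $v$, completing the proof.
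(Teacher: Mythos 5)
There is a genuine gap: you verified the wrong eigenvector equation. With the paper's index convention, $(M^m)_{uv}$ is the number of occurrences of $v$ in $\zeta^{(m)}(u)$, so the word-frequency vector is a \emph{left} eigenvector of $M^m$: mass at $u$ is distributed to the words occurring in $\zeta^{(m)}(u)$, giving the stationarity condition $\sum_u d^m_u\,(M^m)_{uv} = 2\,d^m_v$, i.e.\ $(M^m)^{T}d^m = 2\,d^m$. (The paper's $M^m$ is the transpose of Queff\'elec's composition matrix, and the cited Corollary 5.4 identifies frequencies with the Perron eigenvector of \emph{her} matrix.) What you proved is $M^m\mathbf{1}=2\mathbf{1}$, i.e.\ that all \emph{row} sums equal $2$ --- which, as you yourself observe, holds ``regardless of the internal structure'' for any substitution of constant length $2$ and for \emph{every} $m$, not just $m=2^k$. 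Since the left and right Perron eigenvectors of a primitive non-symmetric matrix are in general different vectors, simplicity of $\lambda=2$ does not let you pass from $M^m\mathbf{1}=2\mathbf{1}$ to $d^m\propto\mathbf{1}$.

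A litmus test shows the argument cannot be repaired as it stands: nothing in it uses $m=2^k$, so it would equally ``prove'' that all allowed $m$-words have measure $1/p(m)$ for every $m$. That is false. For $m=1$ the paper's matrix is $M^1=\left(\begin{smallmatrix}1&1\\2&0\end{smallmatrix}\right)$, whose row sums are $2$, yet the letter frequencies are $\mu([0])=2/3$, $\mu([1])=1/3$ (they form the left eigenvector $(2/3,1/3)$, not the right eigenvector $(1/2,1/2)$); similarly, for $m=3$ the $p(3)=5$ allowed words have measures $1/3$ and $1/6$ by Theorem~\ref{T:freq-m=general}, not $1/5$ each. The nontrivial statement --- and what the paper's proof actually verifies, despite loosely calling it a ``row sum'' --- is that for $m=2^k$ every \emph{column} sum of $M^m$ equals $2$: each allowed word $w_j^{(m)}$ occurs in $\zeta^{(m)}(w_i^{(m)})$ for exactly two indices $i$, which is extracted from \eqref{EQ:wi-m}; this is precisely where the hypothesis $m=2^k$ and the combinatorics of $\omega$ enter. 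Your normalization step via $p(m)=\tfrac32 m$ and the final appeal to $\mu([u])=d^m_u$ are fine, but the eigenvector identity you checked is the trivial one that does not determine the measure.
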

\begin{proof}
  It is enough to show that every row sum of $M_m$ is equal to $2$.
  For $m=2$ it is easy.
  So assume that $m \geq 4$.
  By \eqref{EQ:wi-m} we have
    \begin{equation}
	  \zeta^{(m)} \left(w_i^{(m)}\right) = w_{2i-1}^{(m)}w_{2i}^{(m)}=
		  \begin{cases}
		  w_{2i-1}^{(m)}w_{2i}^{(m)} &\text{for } i \leq \frac{3}{4}m; \\
		  w_{2i-1-m/2}^{(m)}w_{2i-m/2}^{(m)} &\text{for } \frac{3}{4}m < i \leq m ; \\
		  w_{2i-1-2m}^{(m)}w_{2i-2m}^{(m)} &\text{for } m < i \leq \frac{3}{2}m. 
		  \end{cases}
  \end{equation}
  Hence, for $1 \leq j \leq m$, the word $w_j^{(m)}$ occurs in $\zeta^{(m)}(w_i^{(m)})$
  for $i=\lceil \frac{j}{2} \rceil$ and $i=\lceil\frac{j}{2}\rceil +m $.
  Further, for $m < j \leq \frac{3}{2}m$, the word $w_j^{(m)}$ occurs in $\zeta^{(m)}(w_i^{(m)})$
  for $i=\lceil \frac{j}{2} \rceil$ and $i=\lceil\frac{j}{2}\rceil +\frac{m}{4} $. The proof is complete.

\end{proof}

\begin{proof}[Proof of Theorem~\ref{T:freq-m=general}]
	Theorem~\ref{T:freq-m=general} holds for $q=0$ by the previous lemma, so let $q\geq1$.
	Put $n=2^{k+1}$. If \eqref{C1:T:freq-m=general} is true
	then, by Lemma~\ref{L:mwords-general}, there is exactly one index $j$ such that $i <j \leq 3\cdot 2^k$ and $w_i^{(m)} = w_j^{(m)}$;
	in this case $[v] = [w_i^{(n)}] \sqcup [w_j^{(n)}]$.
	Otherwise, $[v] = [w_i^{(n)}] $.
	Now the theorem follows from Lemma~\ref{P:dm-vector-from-queffelec}.
\end{proof}

%%%%%%%%%%%%%%%%%%%%%%%%%%%%%%%%%%%%%%%%%%%%%%%%%%%%%%%%%%%%%%%%%%%%%%%%%%%%%%%
%%%%%%%%%%%%%%%%%%%%%%%%%%%%%%%%%%%%%%%%%%%%%%%%%%%%%%%%%%%%%%%%%%%%%%%%%%%%%%%
%%%%%%%%%%%%%%%%%%%%%%%%%%%%%%%%%%%%%%%%%%%%%%%%%%%%%%%%%%%%%%%%%%%%%%%%%%%%%%%
%%%%%%%%%%%%%%%%%%%%%%%%%%%%%%%%%%%%%%%%%%%%%%%%%%%%%%%%%%%%%%%%%%%%%%%%%%%%%%%
%%%%%%%%%%%%%%%%%%%%%%%%%%%%%%%%%%%%%%%%%%%%%%%%%%%%%%%%%%%%%%%%%%%%%%%%%%%%%%%
\section{Correlation integral and RQA measures}\label{S:correlation-integral}

\begin{proof}[Proof of Theorem \ref{THM:correl-integ-introd} ]
	By \cite{pesin1993rigorous}, modified to uniquely ergodic systems,
	$\lim \corsum(\omega, n, \eps) = \corint(\mu, \eps)$ provided $\corint(\mu, \eps)$ is continuous at $\eps$.
	Since the metric $\rho$ attains only values from $2^{-\NNN_0} \cup \{0\}$,
	$ \corsum(\omega, n, \eps)$ and $\corint(\mu, \eps)$ are
	constant on $\eps \in \left[ 2^{-m}, 2^{-m+1 }\right)$ for every $m$.
	This easily implies $\lim_{n}  \corsum(\omega, n, \eps) = \corint(\mu, \eps)$ for every $\eps$.
	Since
	$$ \corint(\mu, \eps) = \sum_{v\in\mathcal{L}^m}(\mu [v])^2,$$
	Theorem~\ref{T:freq-m=general} and Corollary~\ref{COR:number-of-mu} yield the desired result.
\end{proof}

\begin{corollary}\label{C:ceps-limits}
	Let $0< \eps < \frac{1}{2}$ and $m_\eps$ be defined as in \eqref{EQ:m-eps}. Then
	\begin{equation*}
	\frac{2}{3m_\eps} \leq \corint(\mu, \eps) \leq \frac{25}{36m_\eps}.
	\end{equation*}
	Moreover, if $m_\eps \in \{ 2^k, 2^k+2^{k-1}, k \geq 1 \}$ then $\corint(\mu, \eps) = \frac{2}{3m_\eps}$,
	and if $m_\eps \in \{ 2^k + 2^{k-2}, k \geq 1  \}$ then $\corint(\mu, \eps) = \frac{25}{36m_\eps}$.
	
\end{corollary}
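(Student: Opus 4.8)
The plan is to prove the bounds and the extremal cases of Corollary~\ref{C:ceps-limits} by combining the explicit formula for $\corint(\mu,\eps)$ from Theorem~\ref{THM:correl-integ-introd} with a careful analysis of how the quantity behaves as $m_\eps$ ranges over a dyadic block $2^k \le m_\eps < 2^{k+1}$. First I would observe that since $0<\eps<\tfrac12$ forces $m_\eps \ge 2$, only the last two cases of Theorem~\ref{THM:correl-integ-introd} are relevant. Writing $m_\eps = 2^k+q$ with $0 \le q < 2^k$, I would introduce the normalized variable and express $\corint(\mu,\eps)$ as an explicit function of $q$ on each of the two subranges $0 \le q < 2^{k-1}$ and $2^{k-1} \le q < 2^k$.

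Let me sketch the computation.

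\begin{proof}[Proof of Corollary~\ref{C:ceps-limits}]
Since $0<\eps<\tfrac12$ we have $m_\eps \ge 2$, so Theorem~\ref{THM:correl-integ-introd} gives either
$\corint(\mu,\eps) = (3\cdot 2^{k+1}-4q)/(3\cdot 2^k)^2$ when $q < 2^{k-1}$, or
$\corint(\mu,\eps) = (5\cdot 2^k-2q)/(3\cdot 2^k)^2$ when $q \ge 2^{k-1}$,
where $m_\eps = 2^k+q$. In the first range, the numerator decreases in $q$, attaining its maximum $3\cdot 2^{k+1}$ at $q=0$ and approaching $4\cdot 2^k$ as $q\uparrow 2^{k-1}$; in the second range the numerator decreases from $4\cdot 2^k$ at $q=2^{k-1}$ to $4\cdot 2^k$ as $q\uparrow 2^k$. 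One then verifies directly that $\corint(\mu,\eps)\cdot m_\eps$ lies between $\tfrac23$ and $\tfrac{25}{36}$ in each range: the lower bound $\tfrac{2}{3m_\eps}$ is attained exactly when the numerator equals $4\cdot 2^k = \tfrac23 (3\cdot 2^k)\cdot 2$ and $m_\eps$ takes the corresponding value, which happens at $q=0$ (i.e.\ $m_\eps=2^k$) and at $q=2^{k-1}$ (i.e.\ $m_\eps = 2^k+2^{k-1}$). The upper bound $\tfrac{25}{36 m_\eps}$ is attained at the value of $q$ maximizing $\corint(\mu,\eps)\cdot m_\eps$, which one checks falls at $q = 2^{k-2}$ (i.e.\ $m_\eps = 2^k+2^{k-2}$), yielding numerator $3\cdot 2^{k+1}-2^k = 5\cdot 2^k$ and product $\tfrac{5\cdot 2^k (2^k+2^{k-2})}{(3\cdot 2^k)^2} = \tfrac{25}{36}$. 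The intermediate bounds follow by monotonicity of $m_\eps\cdot\corint(\mu,\eps)$ as a function of $q$ on each subinterval.
\end{proof}

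The main obstacle I anticipate is not the algebra itself but correctly identifying where the product $m_\eps \cdot \corint(\mu,\eps)$ is maximized, since $\corint(\mu,\eps)$ is decreasing in $m_\eps$ while the factor $m_\eps$ is increasing; the product is therefore not monotone, and the maximizer $q=2^{k-2}$ sits strictly inside the first subrange rather than at an endpoint. The cleanest way to handle this is to treat $q/2^k \in [0,\tfrac12)$ as a continuous parameter $t$, write $m_\eps\cdot\corint$ as an explicit rational function of $t$ on each subrange, and check that on the first range it is a downward-opening parabola in $t$ with vertex at $t=\tfrac14$ (giving $q=2^{k-2}$), while on the second range it is monotone. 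The extremal-case claims then reduce to evaluating this function at the dyadic points $t \in \{0, \tfrac14, \tfrac12\}$, each of which must be a genuine integer value of $q$, which requires $k \ge 2$ for the $q=2^{k-2}$ case — a harmless restriction since for $m_\eps$ of the form $2^k+2^{k-2}$ with $k\ge 1$ the integrality is automatic.
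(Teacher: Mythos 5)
Your overall strategy is the same as the paper's: substitute the formula from Theorem~\ref{THM:correl-integ-introd}, normalize, and do elementary one-variable optimization. (The paper uses $x=q/m_\eps\in[0,\tfrac12)$, which turns the two cases into the rational functions $\frac{6-10x}{9(1-x)^2}$ on $[0,\tfrac13)$ and $\frac{5-7x}{9(1-x)^2}$ on $[\tfrac13,\tfrac12)$; your $t=q/2^k$ turns them into quadratics instead, which is an equally good and arguably cleaner normalization.) Your treatment of the first subrange is correct: $m_\eps\corint(\mu,\eps)=(1+t)(6-4t)/9=(6+2t-4t^2)/9$ is a downward-opening parabola with vertex at $t=\tfrac14$, value $\tfrac{25}{36}$ there, and value $\tfrac23$ at $t=0$ and $t=\tfrac12$.

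The genuine gap is your claim that $m_\eps\cdot\corint(\mu,\eps)$ is monotone on the second subrange $2^{k-1}\le q<2^k$. It is not: there $m_\eps\corint(\mu,\eps)=(1+t)(5-2t)/9=(5+3t-2t^2)/9$, again a downward-opening parabola, with vertex at $t=\tfrac34$ --- strictly inside $[\tfrac12,1)$ --- where it attains $\tfrac{49}{72}$. Your own endpoint values already contradict monotonicity: the value at $t=\tfrac12$ is $\tfrac23$ and the limit as $t\to1^-$ is $\tfrac23$, so a monotone function would have to be constant, which is absurd. As written, your argument therefore establishes neither the upper bound $\tfrac{25}{36}$ nor the lower bound $\tfrac23$ on the second subrange; what is needed (and what the paper explicitly checks) is that the interior maximum satisfies $\tfrac{49}{72}<\tfrac{50}{72}=\tfrac{25}{36}$, while the minimum of a downward parabola over an interval occurs at an endpoint, giving $\ge\tfrac23$. (There is also a small slip earlier: on the second range the numerator decreases to $3\cdot2^k$, not $4\cdot2^k$, as $q\uparrow2^k$.) Once you handle the second subrange by the same parabola argument you used on the first, your proof is complete and essentially identical to the paper's.
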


\begin{proof}
	Write $m_\eps=2^k+q$ with $k \geq 1$ and $0 \leq q <2^k$. Let $x = \frac{q}{m_\eps} \in \left[0, \frac{1}{2}\right)$.
	Using Theorem~\ref{THM:correl-integ-introd} and substituting $2^k = m_\eps-q$ into $m_\eps \corint(\mu, \eps) $ we get
	
	\begin{eqnarray*}
		m_\eps \corint(\mu, \eps) = \frac{6-10x}{9(1-x)^2} \quad &\text{for } 0 \leq q < 2^{k-1}, \\
		m_\eps \corint(\mu, \eps) = \frac{5-7x}{9(1-x)^2} \quad &\text{ for } 2^{k-1} \leq q < 2^{k}.
	\end{eqnarray*}
	Using elementary calculus we obtain that $	\frac{2}{3} \leq m_\eps \corint(\mu, \eps) \leq \frac{25}{36}\,$ if $0 \leq q < 2^{k-1}$ and $	\frac{2}{3} \leq m_\eps \corint(\mu, \eps) \leq \frac{49}{72} < \frac{25}{36}\,$ if $2^{k-1} \leq q <2^k$.
	Moreover, minimum is attained at the points $x=0$ and $x=\frac{1}{3}$, corresponding to $q=0$ and $q=2^{k-1}$, and maximum is attained at the point $x=\frac{1}{5}$ corresponding to $q=2^{k-2}$.
\end{proof}

\begin{proof}[Proof of Theorem \ref{THM:RR}]
	If $\eps \geq 1$ then, by \eqref{EQ:RR-and-DET} and Theorem~\ref{THM:correl-integ-introd},
	$\rr_\ell(\omega,n, \eps) = 1$ for every $n$, hence $\rr_\ell(\omega,\eps) = 1$.
	So assume that $0 < \eps < 1$.
	By \eqref{EQ:bowen-for-pd}, for every $x,y \in X$ we have $\rho_\ell(x,y) \leq \eps$ if and only if $ \rho(x,y) \leq 2^{-\ell+1} \eps$.
	So
	$$ \corsum_\ell(\omega,n,\eps) = \corsum(\omega, n, 2^{-\ell+1}\eps).$$
	Thus, by \eqref{EQ:RR-and-DET} and Theorem~\ref{THM:correl-integ-introd},
	\begin{equation} \label{EQ:limRR}
		\rr_\ell(\omega, \epsilon)=\lim\limits_{n \to \infty} \rr_\ell(\omega, n, \eps) = \ell \,\corint(\mu, 2^{-\ell+1}\eps) - (\ell-1)\,\corint(\mu, 2^{-\ell}\eps).
	\end{equation}
	Notice that $m_{2^{-\ell}\eps} = m_\eps+\ell$ and $m_{2^{-\ell+1}\eps} = m_\eps+\ell-1$,
	since $\eps < 1$ and $\ell \geq 1$.
	Put $m=m_\eps+\ell-1$. If $m=1$ (i.e., $m_\eps = 1$ and $\ell = 1$),
	then $\rr_\ell(\omega, \eps) = 5/9$ by \eqref{EQ:limRR} and Theorem~\ref{THM:correl-integ-introd}.
	So we may assume that $m\geq 2$ (i.e. $m_\eps + \ell \geq 3$) and
	hence we may write $m=2^k+q$ with $k \geq 1$ and $0 \leq q < 2^k$.
	
	Now we consider four cases: $q < 2^{k-1}-1$, $q = 2^{k-1}-1$, $2^{k-1} \leq q < 2^k-1$, and $q = 2^k-1$.
	In the first and third cases
	we have $m_{2^{-\ell}\eps} = m+1 = 2^k+(q+1)$ with $q+1 < 2^{k-1}$ and $2^{k-1} \leq q < 2^k$, respectively.
	So \eqref{EQ:limRR} and Theorem~\ref{THM:correl-integ-introd} give the formulas for $\rr_\ell(\omega, \eps)$.
	
	In the second case ($q = 2^{k-1}-1$) we can write $m_{2^{-\ell}\eps} = 2^k + 2^{k-1}$ and
	in the fourth case ($q = 2^k-1$) we can write $m_{2^{-\ell}\eps} = 2^{k+1}+0$; as above,
	\eqref{EQ:limRR} and Theorem~\ref{THM:correl-integ-introd} yield the formula for $\rr_\ell(\omega, \eps)$.

\end{proof}

\begin{proof}[Proof of Theorem \ref{THM:DET=1}]
From \eqref{EQ:RR-and-DET} and the definition of determinism, we have
$$ \deter_\ell(\omega, n, \eps) = \frac{\rr_\ell (\omega, n, \eps)}{\rr_1 (\omega, n, \eps)}. $$
Using \eqref{EQ:limRR} and the fact that $\rr_1 (\omega, \eps) = \corint(\mu, \eps) >0$, we obtain
\begin{equation} \label{EQ:det-pomocou-RR}
	DET_\ell(\omega, \eps) =	\lim\limits_{n \to \infty} DET_\ell(\omega, n, \eps) = \frac{\rr_\ell (\omega, \eps)}{ \rr_1(\omega, \eps)}.\\
\end{equation}

It is clear that $\deter_\ell(\omega, \eps) =1$ for $\eps \geq 1$, so assume that $\eps < 1$. Let $m_\eps = 2^{k} + q$ and $m_\eps + \ell -1 = 2^{k'} + q'$,
where $ k, k' \geq 0,\, 0 \leq q <2^{k}$ and $0 \leq q' <2^{k'}$.
We now compute $\deter_\ell(\omega, \eps)$ using Theorems~\ref{THM:correl-integ-introd}, \ref{THM:RR} and \eqref{EQ:det-pomocou-RR}.
We distinguish three cases.

(a) Let $\eps \in (0,1)$ be such that $k'=k$; then $q' = q+\ell-1$; we write $\eps \in E_a$.
If $0 \leq q < q' <2^{k-1}$ or $2^{k-1} \leq q < q' < 2^k$, we immediately have $ \deter_\ell(\omega, \eps) = 1.$
Otherwise  $0 \leq q <2^{k-1} \leq q' < 2^k$ and
	$$ \deter_\ell(\omega, \eps) = \frac{5\cdot2^k-2q}{6\cdot 2^k-4q} < 1.$$
Here $2^{k-1}-\ell+1 \leq q < 2^{k-1}$ and so $q2^{-k} \to 1/2$ for $\eps \to 0$.
Thus we have 
	$$ \lim_{\substack{\eps \to 0 \\ \eps \in E_a}}  \deter_\ell(\omega, \eps) = 1.$$

(b) Let $\eps \in (0,1)$ be such that $k'=k+1$; we write $\eps \in E_b$.
Then $q' = q+\ell-1-2^k$, and so
$$\deter_\ell(\omega, \eps) = \frac{3\cdot2^k+\Delta}{3\cdot 2^k+2\Delta} < 1, \text{  where  } \Delta = -q'+l-1 \in \{ 1, \ldots, \ell-1 \}.$$
Clearly
$$ \lim_{\substack{\eps \to 0 \\ \eps \in E_b}} \deter_\ell(\omega, \eps) = 1.$$

(c) If  $\eps \in (0,1)\backslash(E_a \cup E_b)$, then $k' \geq k+2$ and we again have $ \deter_\ell(\omega, \eps) < 1$.
Since this can happen only for large enough $\eps$,
this case does not affect the limit $\lim\limits_{\eps \to 0} \deter_\ell (\omega, \eps)$.
(In fact, if $\eps < \min(2^{-(\ell-2)}, 1)\,$ then $m_\eps \geq \ell-1$, and so $2^{k'}+q' = m_\eps + \ell -1 \leq 2m_\eps = 2(2^k+q)$. From this we immediately have $k' \leq k+1$.)
\medskip

Thus we have proved that $\deter_\ell(\omega, \eps) = 1$ if and only if
one of \ref{Case1-in-THM:DET=1}--\ref{Case3-in-THM:DET=1} happens (otherwise  $\deter_\ell(\omega, \eps) < 1$)
and that $\lim\limits_{\eps \to 0} \deter_\ell(\omega, \eps) = 1$.

\end{proof}

\subsection{General embedding dimension} \label{SUBS:emb-dim}
	Up to now we considered recurrence characteristics without embedding.
	The results can be easily generalized to arbitrary embedding dimension $d \geq 1$.
	
	If $x$ is a sequence over $A = \{0,1\}$, then the \emph{embedded sequence} $x^d$ is a sequence over $A ^d = \{0,1\}^d$ defined by
	\begin{equation*}
		x^d = x_1^d  x_2^d \ldots = (x_1 x_2 \ldots x_d)(x_2 x_3 \ldots x_{d+1}) \ldots
	\end{equation*}
	A metric $\rho^d $ in the embedding space $(\AAa^d)^\NNN$ is defined as in Section~\ref{S:prelim}; that is,
	\begin{equation*}
	\rho^d (x^d, y^d) =
	\begin{cases}
	2^{-k+1} &\text{ if } x^d \neq y^d ,  \text{ where }k= \min\{ i: x_i^d \neq y_i^d \}, \\
	0 &\text{ if }  x^d=y^d.
	\end{cases}
	\end{equation*}
	If $k >1$ then trivially
	$$ \rho^d(x^d, y^d) = 2^{-k+1} \qquad \text{ if and only if } \qquad \rho(x,y) = 2^{-(d+k-2)}.$$
	So for correlation sums $\corsum _\ell^d$, defined by~\eqref{EQ:C_ell}
	with $\rho_\ell$ replaced by $\rho_\ell^d$, it holds that
	$$  \corsum _\ell^d (x^d, n, \eps) = \corsum (x, n, 2^{-(l-1)-(d-1)} \eps)$$
	for every $x \in \AAa^\NNN, \eps \in (0,1)$ and $n \in \NNN$.
	This together with Theorem~\ref{THM:correl-integ-introd} yield an explicit formula
	for (embedded) correlation integrals $\corint_\ell^d(\omega^d, \eps)$
	for the period-doubling sequence $\omega$.
	To obtain formulas for $\rr_\ell^d(\omega^d, \eps)$ and $\deter_\ell^d(\omega^d, \eps)$
	it suffices to use \eqref{EQ:RR-and-DET}:
	\begin{eqnarray*}
		\rr_\ell^d (\omega^d, n, \eps)&=& \ell \cdot \corsum_\ell^d(\omega^d,n, \eps) - (\ell-1) \cdot \corsum_{\ell+1}^d(\omega^d, n, \eps), 
		\\
		\qquad \deter_\ell^d(\omega^d, n, \eps)  &=& \frac{\rr_\ell^d(\omega^d, n, \eps)}{\rr_1^d(\omega^d, n, \eps)}.
	\end{eqnarray*}

%%%%%%%%%%%%%%%%%%%%%%%%%%%%%%%%%%%%%%%%%%%%%%%%%%%%%%%%%%%%%%%%%%%%%%%%%%%%%%%
%%%%%%%%%%%%%%%%%%%%%%%%%%%%%%%%%%%%%%%%%%%%%%%%%%%%%%%%%%%%%%%%%%%%%%%%%%%%%%%
%%%%%%%%%%%%%%%%%%%%%%%%%%%%%%%%%%%%%%%%%%%%%%%%%%%%%%%%%%%%%%%%%%%%%%%%%%%%%%%
%%%%%%%%%%%%%%%%%%%%%%%%%%%%%%%%%%%%%%%%%%%%%%%%%%%%%%%%%%%%%%%%%%%%%%%%%%%%%%%
%%%%%%%%%%%%%%%%%%%%%%%%%%%%%%%%%%%%%%%%%%%%%%%%%%%%%%%%%%%%%%%%%%%%%%%%%%%%%%%
\section*{Acknowledgements}
The author gratefully acknowledges the many helpful suggestions of Vladim{\'\i}r {\v{S}}pitalsk{\'y}.
This work was supported by the Slovak Research and Development Agency
under the contract No.~APVV-15-0439, and by VEGA grant 1/0768/15.

%%%%%%%%%%%%%%%%%%%%%%%%%%%%%%%%%%%%%%%%%%%%%%%%%%%%%%%%%%%%%%%%%%%%%%%%%%%%%%%
%%%%%%%%%%%%%%%%%%%%%%%%%%%%%%%%%%%%%%%%%%%%%%%%%%%%%%%%%%%%%%%%%%%%%%%%%%%%%%%
%%%                        REFERENCES                                       %%%
%%%%%%%%%%%%%%%%%%%%%%%%%%%%%%%%%%%%%%%%%%%%%%%%%%%%%%%%%%%%%%%%%%%%%%%%%%%%%%%
%%%%%%%%%%%%%%%%%%%%%%%%%%%%%%%%%%%%%%%%%%%%%%%%%%%%%%%%%%%%%%%%%%%%%%%%%%%%%%%
\bibliography{period-doubling-seq}

\end{document}